%
%
\documentclass[10pt]{amsart}
\usepackage{hyperref}
\usepackage{amsmath}
\usepackage{amssymb}
\usepackage{xypic}
\usepackage{yhmath}
\usepackage{setspace}
\def\beqnn{\begin{eqnarray*}}\def\eeqnn{\end{eqnarray*}}

\newtheorem{theorem}{Theorem}[section]
\newtheorem{lemma}[theorem]{Lemma}
\newtheorem{proposition}[theorem]{Proposition}
\newtheorem{question}[theorem]{Question}
\newtheorem{problem}[theorem]{Problem}

\theoremstyle{definition}

\theoremstyle{remark}
\newtheorem{remark}[theorem]{Remark}

\numberwithin{equation}{section}



\begin{document}

\begin{center}
\title[Prawitz's area theorem and mixed Aharonov sequence]{Prawitz's area theorem and \\ mixed Aharonov sequence}
\end{center}

\author{Jianjun Jin}
\address{School of Mathematics Sciences, Hefei University of Technology, Xuancheng Campus, Xuancheng 242000, P.R.China}
\email{jin@hfut.edu.cn, jinjjhb@163.com}
\subjclass{Primary 30C55; Secondary 30C62.}

\keywords{Prawitz's area theorem, mixed Aharonov sequence, univalence criterion, univalent analytic function with a quasiconformal extension.}
\begin{abstract}
In this paper, motivated by the Prawitz area theorem and the work of Aharonov, we introduce the mixed Aharonov sequence associated with a locally univalent analytic function. By using the mixed Aharonov sequence, we establish a new univalence criterion for the locally univalent analytic functions in the unit disk, which generalizes some related results of Aharonov in \cite{Ah}. We also prove some new properties about the (mixed) Aharonov sequence, in particular, a new inequality for the Aharonov sequence is established for the univalent functions with a quasiconformal extension. 
\end{abstract}

\maketitle

\section{{\bf {Introduction}}}
Let $\mathbb{D}=\{z:|z|<1\}$ be the unit disk in the complex plane $\mathbb{C}$.  
We denote by $\widehat{\mathbb{C}}=\mathbb{C}\cup\{\infty\}$ the extended complex plane.  Let $\mathbb{D}_{e}=\widehat{\mathbb{C}}-\overline{\mathbb{D}}$ be the
exterior of $\mathbb{D}$ and $\mathbb{T}=\partial\mathbb{D}=\partial\mathbb{D}_e$ be the unit circle.  

We say $f$ is a univalent analytic (meromorphic) function in an open domain $\Omega$ of $\widehat{\mathbb{C}}$, if $f$ is a one-to-one and analytic (meromorphic) function in $\Omega$.  Let $\mathcal{U}(\mathbb{D})$ be the class of all univalent analytic functions $f$ in $\mathbb{D}$. We denote by $S$ the class of all univalent analytic functions $f$ in  $\mathbb{D}$ with $f(0)=f'(0)-1=0$.

We denote by $\Sigma$ the class of all univalent meromorphic functions $g$ in  $\mathbb{D}_e$ with the following expansion
at infinity as
$$g(z)=z+b_0+\frac{b_1}{z}+\frac{b_2}{z^2}+\cdots. $$

It is well known that the area theorems play an important role in the study of the theory of univalent analytic (meromorphic) functions. The first area theorem was proved by Gr\"onwall in 1914, see \cite[page 18]{Po}, which states that
\begin{theorem}\label{g-1}If $g\in \Sigma$, then we have 
\begin{equation}\label{g-ine}
\frac{1}{\pi}\iint_{\mathbb{D}_e}|g'(z)-1|^2dxdy=\sum_{n=1}^{\infty}n|b_n|^2 \leq 1.
\end{equation}
Equality holds in (\ref{g-ine}) if and only if $g$ is a full mapping.
\end{theorem}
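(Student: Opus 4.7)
The plan is to prove the identity and the inequality separately by standard Laurent-series computations, with the inequality coming from a geometric area comparison.

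First I would establish the equality
\[
\frac{1}{\pi}\iint_{\mathbb{D}_e}|g'(z)-1|^2\,dxdy=\sum_{n=1}^{\infty}n|b_n|^2.
\]
Differentiating the Laurent series gives $g'(z)-1=-\sum_{n=1}^{\infty}n b_n z^{-n-1}$. On the circle $|z|=r>1$, I write $z=re^{i\theta}$ and use orthogonality of $\{e^{ik\theta}\}$ on $[0,2\pi]$ to obtain
$\int_0^{2\pi}|g'(re^{i\theta})-1|^2 d\theta = 2\pi\sum_{n=1}^{\infty}n^2|b_n|^2 r^{-2n-2}$. Multiplying by $r\,dr$ and integrating over $1<r<R$, then letting $R\to\infty$ by monotone convergence, yields the claimed identity.

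Next I would prove $\sum_{n=1}^{\infty}n|b_n|^2\le 1$ by a geometric area argument. For $R>1$, univalence of $g$ on $\mathbb{D}_e$ makes $\Gamma_R:=g(\{|z|=R\})$ a Jordan curve bounding a compact region $D_R\subset\mathbb{C}$. Apply Green's theorem to compute
\[
\operatorname{Area}(D_R)=\frac{1}{2i}\oint_{\Gamma_R}\bar{w}\,dw=\frac{R}{2}\int_0^{2\pi}\overline{g(Re^{i\theta})}\,g'(Re^{i\theta})e^{i\theta}\,d\theta,
\]
and substitute the Laurent expansions of $g,g'$. Only the ``diagonal'' terms survive, producing
\[
\operatorname{Area}(D_R)=\pi R^2-\pi\sum_{n=1}^{\infty}n|b_n|^2 R^{-2n}.
\]
On the other hand, $D_R$ contains the univalent image $g(\{1<|z|<R\})$, whose area equals $\iint_{1<|z|<R}|g'(z)|^2\,dxdy$ by the change-of-variables formula. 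A short computation parallel to Step 1 (using $|g'|^2=1+2\operatorname{Re}(g'-1)+|g'-1|^2$ and the vanishing of the middle term after $\theta$-integration) gives
\[
\iint_{1<|z|<R}|g'(z)|^2\,dxdy=\pi(R^2-1)+\pi\sum_{n=1}^{\infty}n|b_n|^2\bigl(1-R^{-2n}\bigr).
\]
Combining the containment $\operatorname{Area}(D_R)\ge\operatorname{Area}(g(\{1<|z|<R\}))$ with the two explicit formulas, the $R$-dependent terms cancel and one is left with $\sum_{n=1}^{\infty}n|b_n|^2\le 1$.

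Finally I would address the equality case. Equality in the containment above holds for every $R>1$ precisely when $D_R\setminus g(\{1<|z|<R\})$ has planar Lebesgue measure zero; letting $R\to 1^+$ and $R\to\infty$, this is exactly the statement that $\widehat{\mathbb{C}}\setminus g(\mathbb{D}_e)$ has measure zero, i.e.\ $g$ is a full mapping. The main technical point I expect to need care with is the justification of Green's theorem on the Jordan domain $D_R$ (since $\Gamma_R$ is only $C^1$ but bounds a Jordan domain to which the theorem applies), together with the interchange of summation and integration, which is routine because the Laurent series for $g'$ converges uniformly on compact subsets of $\mathbb{D}_e$.
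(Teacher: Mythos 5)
The paper does not prove this statement: it is Gr\"onwall's classical area theorem, quoted with a reference to Pommerenke's book, so there is no internal proof to compare against. Your argument is the standard one and is correct: the Parseval computation for the identity, the Green's-theorem evaluation of $\operatorname{Area}(D_R)$, the change-of-variables formula for the area of $g(\{1<|z|<R\})$, and the containment all check out, and the two explicit formulas do combine to give $\sum n|b_n|^2\le 1$. Two small points of polish. First, in applying $\operatorname{Area}(D_R)=\frac{1}{2i}\oint_{\Gamma_R}\bar w\,dw$ you should justify that $\theta\mapsto g(Re^{i\theta})$ is positively oriented (e.g.\ the winding number about any $w_0\in D_R$ is $1$, by the argument principle applied to $g-w_0$ on an annulus $R<|z|<\rho$ together with $g(z)\sim z$ at infinity); also $\Gamma_R$ is an analytic Jordan curve, not merely $C^1$, since $g'\neq 0$. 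Second, the equality case is cleaner than your phrasing suggests: subtracting your two formulas gives $\operatorname{Area}(D_R)-\operatorname{Area}\bigl(g(\{1<|z|<R\})\bigr)=\pi\bigl(1-\sum_{n\ge1} n|b_n|^2\bigr)$, independent of $R$, and since $D_R\setminus g(\{1<|z|<R\})$ coincides with $\mathbb{C}\setminus g(\mathbb{D}_e)$ up to the null set $\Gamma_R$, the deficit equals $m\bigl(\mathbb{C}\setminus g(\mathbb{D}_e)\bigr)$ for a single fixed $R$; no limiting in $R$ is needed.
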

\begin{remark}
Here we say $g$ is a full mapping if the Lebesgue measure of $\widehat{\mathbb{C}}\setminus g(\mathbb{D}_e)$ is zero. 
\end{remark}

If $f\in S$, then we see that $g(z)=\frac{1}{f(z^{-1})}$ belongs to $\Sigma$. It follows from Theorem \ref{g-1} that
\begin{theorem}\label{g-2}If $f\in S$, then we have 
\begin{equation}\label{g-ine-1}
\frac{1}{\pi}\iint_{\mathbb{D}}\left|f'(z)\left[\frac{z}{f(z)}\right]^{2}-1\right|^2\frac{dxdy}{|z|^{4}}=\sum_{n=1}^{\infty}n|a_n|^2\leq 1. 
\end{equation}
Where $$\frac{1}{f(z)}=\frac{1}{z}+\sum_{n=0}^{\infty}a_nz^n.$$
Equality holds in (\ref{g-ine-1}) if and only if $f$ is a full mapping.
\end{theorem}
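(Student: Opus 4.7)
The plan is to reduce this directly to Theorem~\ref{g-1} via the standard inversion trick already indicated in the excerpt: given $f\in S$, set $g(z):=1/f(1/z)$ and verify that $g\in\Sigma$ with the Laurent coefficients at infinity matching (shifting) the coefficients $a_n$ of $1/f$. Since $f(z)=z+c_2z^2+\cdots$ near the origin, one checks by a direct power series manipulation that
\[
g(z)=z+a_0+\frac{a_1}{z}+\frac{a_2}{z^2}+\cdots,
\]
so that in the notation of Theorem~\ref{g-1} we have $b_0=a_0$ and $b_n=a_n$ for $n\ge1$. This identifies the right-hand sum in (\ref{g-ine-1}) with $\sum_{n\ge1}n|b_n|^2$, immediately giving the upper bound $\le 1$ from Gr\"onwall once the left-hand sides are matched.

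Next I would push the area integral through the inversion. Differentiating $g(z)=1/f(1/z)$ gives
\[
g'(z)=\frac{f'(1/z)}{z^{2}f(1/z)^{2}},
\]
so writing $\zeta=1/z$ we obtain the pleasant identity
\[
g'(z)-1=f'(\zeta)\left[\frac{\zeta}{f(\zeta)}\right]^{2}-1.
\]
Under the change of variables $z=1/\zeta$, the region $\mathbb{D}_e$ corresponds to $\mathbb{D}$ and the area element transforms as $dx\,dy=|\zeta|^{-4}d\xi\,d\eta$ (since $|dz/d\zeta|^{2}=|\zeta|^{-4}$). Substituting gives
\[
\iint_{\mathbb{D}_e}|g'(z)-1|^{2}\,dx\,dy
=\iint_{\mathbb{D}}\left|f'(\zeta)\left[\tfrac{\zeta}{f(\zeta)}\right]^{2}-1\right|^{2}\frac{d\xi\,d\eta}{|\zeta|^{4}},
\]
and multiplying by $1/\pi$ together with Theorem~\ref{g-1} yields both the equality of the integral with $\sum n|a_n|^{2}$ and the bound by $1$.

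For the equality case, it suffices to observe that the inversion $w\mapsto 1/w$ is a conformal bijection of $\widehat{\mathbb{C}}$ (sending sets of Lebesgue measure zero to sets of Lebesgue measure zero, away from $0$ and $\infty$, which are negligible), so $g(\mathbb{D}_e)$ is the image of $f(\mathbb{D})$ under $w\mapsto 1/w$. Therefore the complement $\widehat{\mathbb{C}}\setminus g(\mathbb{D}_e)$ has measure zero if and only if $\widehat{\mathbb{C}}\setminus f(\mathbb{D})$ does, i.e.\ $g$ is a full mapping iff $f$ is. Combined with the equality statement in Theorem~\ref{g-1}, this completes the characterization.

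There is essentially no obstacle here beyond bookkeeping; the only point where one must be slightly careful is correctly tracking the Jacobian factor $|\zeta|^{-4}$ and identifying $g'(z)-1$ with $f'(\zeta)[\zeta/f(\zeta)]^{2}-1$ under $\zeta=1/z$, which together produce the weight $|z|^{-4}$ in (\ref{g-ine-1}). Everything else is a transcription of Theorem~\ref{g-1}.
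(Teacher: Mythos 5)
Your proposal is correct and follows exactly the route the paper indicates: passing to $g(z)=1/f(1/z)\in\Sigma$, matching $b_n=a_n$, transporting the area integral under $z=1/\zeta$ with the Jacobian $|\zeta|^{-4}$, and noting that inversion preserves null sets for the equality case, so the statement reduces to Theorem~\ref{g-1}. No issues; this is the paper's own derivation, just written out in detail.
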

\begin{remark}
Here we say $f$ is a full mapping if the Lebesgue measure of $\mathbb{C}\setminus f(\mathbb{D})$ is zero.
\end{remark}

In 1927, Prawitz proved  in \cite{Pra} a generalized area theorem(see also \cite[page 13]{Mi}).  Let $\lambda>0$. Let $g\in \Sigma$ with $g(z)\neq0$ for all $z\in \mathbb{D}_e$. We see that
\begin{equation}
\left[\frac{g(z)}{z}\right]^{\lambda}=\exp\left[\lambda \ln\frac{g(z)}{z}\right]\nonumber 
\end{equation}
is  analytic in $\mathbb{D}_e$. Here the logarithm expression is determined uniquely by requiring that it is analytic in $\mathbb{D}_e$ and takes the value zero at infinity.  

Then we have the following Prawitz area theorem.
\begin{theorem}\label{pra}
Let $\lambda>0$. Let $g\in \Sigma$ with $g(z)\neq0$ for all $z\in \mathbb{D}_e$.  Then we have 
\begin{equation}\label{p-ine}
\sum_{n=1}^{\infty}(n-\lambda)|b_n(\lambda)|^2 \leq \lambda.
\end{equation}
Where \begin{equation}
\left[\frac{g(z)}{z}\right]^{\lambda}=1+\sum_{n=1}^{\infty}b_n(\lambda) z^{-n}.\nonumber 
\end{equation}
Equality holds in (\ref{p-ine}) if and only if $g$ is a full mapping.
\end{theorem}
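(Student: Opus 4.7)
I would use a weighted version of the classical area argument. For $r>1$, let $\gamma_r:=g(\{|z|=r\})$ and let $D_r$ be the bounded Jordan domain it encloses, so $D_r=\widehat{\mathbb{C}}\setminus g(\{|z|>r\})$. Because $g$ is non-vanishing on $\mathbb{D}_e$, the origin lies in $D_r$, and because $\lambda>0$, the weight $|w|^{2\lambda-2}$ is locally integrable near $0$. Set
$$I_r\ :=\ \iint_{D_r}|w|^{2\lambda-2}\,dA(w)\ \ge\ 0.$$
As $r\downarrow 1$ the $D_r$ decrease to $D:=\widehat{\mathbb{C}}\setminus g(\mathbb{D}_e)$, so monotone convergence gives $I_r\downarrow\iint_{D}|w|^{2\lambda-2}\,dA$; this limit vanishes iff $D$ has planar Lebesgue measure zero, i.e.\ iff $g$ is a full mapping, which will dispose of the equality case.

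The main task is to evaluate $I_r$ in terms of $b_n(\lambda)$. A direct computation shows that the single-valued $1$-form $\omega:=\frac{1}{2i\lambda}\,|w|^{2\lambda-2}\,\bar w\,dw$ on $\mathbb{C}\setminus\{0\}$ satisfies $d\omega=|w|^{2\lambda-2}\,dA$. Excising a disk of radius $\varepsilon$ about $0$ and letting $\varepsilon\to 0$ (the contribution of $|w|=\varepsilon$ is $\pi\varepsilon^{2\lambda}/\lambda\to 0$), Stokes' theorem yields
$$I_r\ =\ \frac{1}{2i\lambda}\oint_{\gamma_r^{+}}|w|^{2\lambda-2}\,\bar w\,dw,$$
where $\gamma_r^{+}$ is oriented positively around $D_r$. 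Since $g$ conformally maps $\{|z|>r\}$ onto the exterior of $D_r$, this orientation pulls back to $\{|z|=r\}$ traversed counter-clockwise. Using the pointwise identity $|g|^{2\lambda-2}\bar g\,g'=\tfrac{1}{\lambda}\,\overline{g^{\lambda}}\,(g^{\lambda})'$, I arrive at
$$I_r\ =\ \frac{1}{2i\lambda^{2}}\oint_{|z|=r}\overline{g(z)^{\lambda}}\,d\bigl(g(z)^{\lambda}\bigr).$$

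Writing $g(z)^{\lambda}=z^{\lambda}[g(z)/z]^{\lambda}=\sum_{n\ge 0}b_n(\lambda)\,z^{\lambda-n}$ with $b_0(\lambda):=1$ and parametrising $z=re^{i\theta}$, the monodromy factors $e^{\pm i\lambda\theta}$ cancel between $\overline{g^{\lambda}}$ and $(g^{\lambda})_\theta$, so the integrand is $2\pi$-periodic in $\theta$ and the orthogonality $\int_{0}^{2\pi}e^{i(m-n)\theta}\,d\theta=2\pi\delta_{mn}$ applies. This gives
$$I_r\ =\ \frac{\pi}{\lambda^{2}}\sum_{n=0}^{\infty}(\lambda-n)\,|b_n(\lambda)|^{2}\,r^{2(\lambda-n)}\ =\ \frac{\pi}{\lambda}\,r^{2\lambda}-\frac{\pi}{\lambda^{2}}\sum_{n=1}^{\infty}(n-\lambda)\,|b_n(\lambda)|^{2}\,r^{2(\lambda-n)}.$$
Combining with $I_r\ge 0$ yields $\sum_{n\ge 1}(n-\lambda)|b_n(\lambda)|^{2}r^{2(\lambda-n)}\le\lambda r^{2\lambda}$; letting $r\downarrow 1$ — the series has only finitely many non-positive terms (those with $n\le\lambda$) and all other summands are non-negative and monotonically non-decreasing as $r$ decreases, so monotone convergence applies — produces inequality (\ref{p-ine}).

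The only delicate point I anticipate is branch book-keeping: $z^{\lambda}$ and hence $g(z)^{\lambda}$ are generally multi-valued on $\mathbb{D}_e$, yet the 1-form $\overline{g^{\lambda}}\,d(g^{\lambda})$, equivalently $|g|^{2\lambda-2}\bar g\,g'\,dz$, must be recognised as single-valued because the $e^{2\pi i\lambda}$ monodromy cancels between $\overline{g^{\lambda}}$ and $d(g^{\lambda})$. Isolating this single-valued integrand (together with getting the orientation of $\gamma_r^{+}$ right) is really the heart of the argument; once it is in place, Fourier orthogonality and the non-negativity of $I_r$ do all the remaining work.
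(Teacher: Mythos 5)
Your argument is correct: the weighted area integral $I_r=\iint_{D_r}|w|^{2\lambda-2}\,dA\ge 0$, evaluated via Stokes and Fourier orthogonality applied to $\overline{g^{\lambda}}\,d(g^{\lambda})$, gives exactly $\frac{\pi}{\lambda}r^{2\lambda}-\frac{\pi}{\lambda^{2}}\sum_{n\ge1}(n-\lambda)|b_n(\lambda)|^{2}r^{2(\lambda-n)}$, and letting $r\downarrow1$ yields both the inequality and, since $I_r\downarrow\iint_{\mathbb{C}\setminus g(\mathbb{D}_e)}|w|^{2\lambda-2}\,dA$, the equality characterization by full mappings. The paper itself states this theorem without proof, citing Prawitz and Milin, and your proof is essentially that classical weighted-area argument, so there is nothing to add beyond the branch and orientation bookkeeping you already addressed.
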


If $f\in S$,  then we get that $g(z)=\frac{1}{f(z^{-1})}\in \Sigma$ with $g(z)\neq 0$ for all $z\in \mathbb{D}_e$ and 
$$\frac{z}{f(z)}=\frac{g(z^{-1})}{z^{-1}},\, z\in \mathbb{D}.$$ 

From Theorem \ref{pra},  we obtain the following version of Prawitz area theorem for the class $S$.
\begin{theorem}\label{pra-s}
Let $\lambda>0$. Let $f\in S$.  Then we have 
\begin{equation}\label{ps-ine}
\sum_{n=1}^{\infty}(n-\lambda)|a_n(\lambda)|^2 \leq \lambda.
\end{equation}
Where \begin{equation}
\left[\frac{z}{f(z)}\right]^{\lambda}=1+\sum_{n=1}^{\infty}a_n(\lambda) z^{n}.\nonumber 
\end{equation}
Equality holds in (\ref{ps-ine}) if and only if $f$ is a full mapping.
\end{theorem}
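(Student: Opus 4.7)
The plan is to deduce Theorem \ref{pra-s} directly from Theorem \ref{pra} via the standard inversion $g(z)=1/f(z^{-1})$, as hinted in the text immediately preceding the statement. First, I would verify that this transformation sends $S$ into the subclass of $\Sigma$ consisting of functions that do not vanish in $\mathbb{D}_e$. Since $f\in S$ is univalent in $\mathbb{D}$ with $f(0)=0$ and $f'(0)=1$, the reciprocal $1/f(w)$ has a simple pole of residue $1$ at $w=0$; substituting $w=z^{-1}$ produces a function with expansion $g(z)=z+b_0+b_1z^{-1}+\cdots$ at infinity, and the univalence of $f$ on $\mathbb{D}\setminus\{0\}$ together with $f\neq0$ there guarantees both the univalence and the non-vanishing of $g$ on $\mathbb{D}_e$. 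Thus $g$ meets the hypotheses of Theorem \ref{pra}.

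Next, I would translate the Prawitz expansion for $g$ into the stated expansion for $f$. From $g(z^{-1})=1/f(z)$ one has the algebraic identity $z/f(z)=g(z^{-1})/z^{-1}$ for $z\in\mathbb{D}$. Raising both sides to the power $\lambda$, with the branch of the logarithm normalized (on the $g$-side) so as to be analytic in $\mathbb{D}_e$ and vanish at infinity, yields
\[
\left[\frac{z}{f(z)}\right]^{\lambda}=\left[\frac{g(z^{-1})}{z^{-1}}\right]^{\lambda}=1+\sum_{n=1}^{\infty}b_n(\lambda)\,z^{n},
\]
where the $b_n(\lambda)$ are precisely the Prawitz coefficients of $g$. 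Hence $a_n(\lambda)=b_n(\lambda)$ for every $n\geq1$, and inequality \eqref{ps-ine} follows immediately from inequality \eqref{p-ine} applied to $g$.

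For the equality case, I would observe that the map $w\mapsto 1/w$ is a diffeomorphism of $\mathbb{C}^{*}$ onto itself, so a planar set has Lebesgue measure zero if and only if its image under inversion does. Combined with the identity $g(\mathbb{D}_e)=\{1/w:w\in f(\mathbb{D})\setminus\{0\}\}\cup\{\infty\}$, this shows that $\widehat{\mathbb{C}}\setminus g(\mathbb{D}_e)$ has measure zero if and only if $\mathbb{C}\setminus f(\mathbb{D})$ does. The equality assertion in Theorem \ref{pra-s} is therefore inherited from the corresponding assertion in Theorem \ref{pra}. The one step that requires real care (and which I expect to be the only mild obstacle) is matching the branch conventions for $[g(z)/z]^{\lambda}$ and $[z/f(z)]^{\lambda}$ under the substitution $w=z^{-1}$; this is a bookkeeping matter about analytic continuation from infinity on one side and from the origin on the other, but once the two branches are pinned down by their values at the distinguished point, the coefficient identification is automatic.
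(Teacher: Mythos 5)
Your proposal is correct and follows the same route as the paper: the paper also deduces Theorem \ref{pra-s} from Theorem \ref{pra} by setting $g(z)=1/f(z^{-1})$, noting $g\in\Sigma$ with $g\neq 0$ on $\mathbb{D}_e$ and $z/f(z)=g(z^{-1})/z^{-1}$, so the coefficients match and the full-mapping equality case carries over. Your extra care with the branch normalization and the measure-zero argument under inversion merely fills in details the paper leaves implicit.
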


Under the assumption of  Theorem \ref{pra}, when $\lambda\in (0,1)$, we first notice that
\begin{eqnarray}\label{n-1}
\lambda\left[g'(z)\left[\frac{z}{g(z)}\right]^{1-\lambda}-1\right]&=&z\frac{d}{dz}\left[\frac{g(z)}{z}\right]^{\lambda}+\lambda\left[\frac{g(z)}{z}\right]^{\lambda}-\lambda \nonumber \\
&=&-\sum_{n=1}^{\infty}(n-\lambda)b_n(\lambda) z^{-n}.
\end{eqnarray}
On the other hand, we have
\begin{eqnarray}\label{n-2}
\lefteqn{\frac{1}{\pi}\iint_{\Delta^*}\left|\sum_{n=1}^{\infty}(n-\lambda)b_n(\lambda)  z^{-n}\right|^2 \frac{1}{|z|^{2(1-\lambda)}} dxdy} \nonumber \\
&&=\frac{1}{\pi} \iint_{\Delta^*}\sum_{n=1}^{\infty}(n-\lambda)^2|b_n(\lambda)|^2 |z|^{-2n+2\lambda-2} dxdy \nonumber \\
&&=2\sum_{n=1}^{\infty}(n-\lambda)^2|b_n(\lambda)|^2\int_{1}^{\infty}r^{-2n+2\lambda-2}rdr \nonumber \\
&&=\sum_{n=1}^{\infty}(n-\lambda)|b_n(\lambda)|^2.
\end{eqnarray}
Therefore, we see from (\ref{n-1}) and (\ref{n-2}) that Prawitz's inequality (\ref{p-ine}) can be rewritten as
\begin{equation}\label{p-ine-e}
\frac{1}{\pi}\iint_{\Delta^*}\left|g'(z)\left[\frac{z}{g(z)}\right]^{1-\lambda}-1\right|^2\frac{dxdy}{|z|^{2(1-\lambda)}}\leq \frac{1}{\lambda},
\end{equation} for $\lambda\in (0,1]$. Note that (\ref{p-ine-e}) becomes (\ref{g-ine}) when $\lambda=1$.
Replacing $g$ in (\ref{p-ine-e}) by $\frac{1}{f(z^{-1})}$ for $f\in S$, we obtain that
\begin{equation}
\frac{1}{\pi}\iint_{\Delta^*}\left|\frac{f'(z^{-1})}{[zf(z^{-1}]^2}\left[zf(z^{-1})\right]^{1-\lambda}-1\right|^2\frac{dxdy}{|z|^{2(1-\lambda)}}\leq \frac{1}{\lambda},\nonumber 
\end{equation}
for $\lambda\in (0,1]$.  
By the change of variables $z=\frac{1}{w}$, we get that
\begin{equation}\label{p-ine-d}
\frac{1}{\pi}\iint_{\mathbb{D}}\left|f'(w)\left[\frac{w}{f(w)}\right]^{1+\lambda}-1\right|^2\frac{dudv}{|w|^{2(1+\lambda)}}\leq \frac{1}{\lambda},
\end{equation}
for $\lambda\in (0,1]$. 

For $z\in \mathbb{D}$,  we set 
\begin{equation}\label{sigma}\sigma_{z}(\zeta):=\frac{\zeta+z}{1+\bar{z}\zeta}, \, \zeta \in \mathbb{D}.\end{equation}

Now, let $f\in \mathcal{U}(\mathbb{D})$,  the Koebe transformation $\mathbf{K}_f(z; \zeta)$ of $f$ is defined as 
\begin{equation}\label{Koe}\mathbf{K}_f(z; \zeta):=\frac{f(\sigma_{z}(\zeta))-f(z)}{(1-|z|^2)f'(z)}=\frac{f\left(\frac{\zeta+z}{1+\bar{z}\zeta}\right)-f(z)}{(1-|z|^2)f'(z)},\, \zeta\in \mathbb{D}.\end{equation}
We see that $\mathbf{K}_f(z; \zeta)$ belongs to $S$.
By using $\mathbf{K}_f(z; \zeta)$ to be instead of $f$ in  (\ref{p-ine-d}), we get that 
\begin{equation}
\frac{1}{\pi}\iint_{\mathbb{D}}\left|\frac{ f'(\sigma_{z}(\zeta))}{f'(z)(1+\bar{z}\zeta)^2}
\left[\frac{(1-|z|^2)f'(z)\zeta}{f(\sigma_{z}(\zeta))-f(z)}\right]^{1+\lambda}-1\right|^2\frac{d\xi d\eta}{|\zeta|^{2(1+\lambda)}}\leq \frac{1}{\lambda}. \nonumber
\end{equation}

Then, after the change of variables $\zeta=\frac{w-z}{1-\bar{z}w}$,  we obtain that, for $\lambda\in (0, 1]$,
\begin{equation}\label{main-ine}
\frac{(1-|z|^2)^{2\lambda}}{\pi}\iint_{\mathbb{D}}\left|P(f; z, w)\right|^2\frac{dudv}{|w-z|^{2(1+\lambda)}} \leq  \frac{1}{\lambda}.
\end{equation}
Here 
\begin{equation}
P(f; z, w)=\frac{f'(w)(w-z)}{f(w)-f(z)}
\left[\frac{f'(z)(w-z)}{f(w)-f(z)}\right]^{\lambda}-\left(\frac{1-|z|^2}{1-\bar{z}w}\right)^{1-\lambda},\, z, w \in\mathbb{D}.\nonumber
\end{equation}

\begin{remark}
From the above arguments, we see that Prawitz’s inequality (\ref{main-ine}) holds for all $f\in \mathcal{U}(\mathbb{D})$.  The inequaity (\ref{main-ine}) has appeared in \cite{HS-1},  where (\ref{main-ine})  was used to study the integral means spectrum of univalent analytic functions in $\mathbb{D}$.
\end{remark}
\begin{remark} When $\lambda=1$,  (\ref{main-ine}) reduces to 
\begin{equation}
\frac{(1-|z|^2)^{2}}{\pi}\iint_{\mathbb{D}}|U(f; z,w)|^2dudv\leq 1. \nonumber
\end{equation}
Here $$U(f; z,w)=\frac{f'(z)f'(w)}{[f(w)-f(z)]^2}-\frac{1}{(z-w)^2},\,\, z, w\in \mathbb{D},$$
is known as Grunsky kernel. We set
$$U_f(z):=\left(
\frac{1}{\pi}\iint_{\mathbb{D}}|U(f; z,w)|^2dudv\right)^{\frac{1}{2}},\, z\in \mathbb{D}.$$
It is known that $U(f; z,w)$ and $U_f(z)$ also play an important role in the study of univalent analytic (meromorphic) functions and the universal Teichm\"uller space, see for example \cite{Ba}, \cite{Ha}, \cite{Sh-1}, \cite{Sh-2}, \cite{TT}.\end{remark}

In this note, motivated by the Prawitz area theorem and the work of Aharonov \cite{Ah}, we introduce the mixed Aharonov sequence associated with a locally univalent analytic function, and use it to establish a new univalence criterion for the locally univalent analytic functions in the unit disk.  The univalence criterion will be given in Section 2 and the proof of our first main results will be presented in Section 3.  In Section 4, we will remark some new results on the (mixed) Aharonov sequence and raise some questions, in particular, we obtain a new inequality (see below (\ref{jia})) for the Aharonov sequence for the univalent functions with a quasiconformal extension.
 
\section{\bf {The mixed Aharonov sequence and a generalized univalence criterion for locally univalent functions}}

Astala and Gehring have studied in \cite{AG} the following 
\begin{problem}
For a locally univalent analytic (or meromorphic) function $f$ in the unit disk $\mathbb{D}$,  what additional conditions on $f$ allow one to conclude that $f$ is univalent?
\end{problem}

In this note we continue to consider this problem.  Motivated by the classical work of Aharonov \cite{Ah} and by investigating the Prawitz area theorem, we introduce the notion of {\em mixed Aharonov sequence}  and use it to establish a new univalence criterion for the locally univalent analytic functions in the unit disk. We first recall Aharonov's results proved in \cite{Ah}. 

We denote by $\mathcal{M}(\mathbb{D})$ the class of all locally univalent meromorphic functions in $\mathbb{D}$. We note that any pole of the function $f\in \mathcal{M}(\mathbb{D})$  is of the first order. 

We denote by $\mathcal{L}(\mathbb{D})$ the class of all locally univalent analytic functions $f$ in $\mathbb{D}$, i.e., $f$ is analytic in $\mathbb{D}$ with $f'(z)\neq 0$ for all $z\in \mathbb{D}$. It is easy to see that $\mathcal{L}(\mathbb{D})$ is contained in $\mathcal{M}(\mathbb{D})$.

Let $f\in \mathcal{M}(\mathbb{D})$. For $z\in \mathbb{D}$ with it is not the pole of $f$, we take a point $w$, which is close enough to $z$, and define the sequence $\{\phi_n(f; z)\}$ by the expansion of the following generating function,
\begin{equation}\label{inv}
\frac{f'(z)}{f(w)-f(z)}=\frac{1}{w-z}+\sum_{n=0}^{\infty}\phi_n(f; z)(w-z)^n.
\end{equation}
The sequence $\{\phi_n(f; z)\}$ is called as {\em Aharonov sequence}, which has some fine properties. 

We have
\begin{equation} 
\phi_0(f; z)=-\frac{1}{2}N_f(z)=-\frac{1}{2}\frac{f''(z)}{f'(z)};\nonumber
\end{equation}
\begin{equation}\label{inv-1}
\phi_1(f; z)=-\frac{1}{6}S_f(z)=-\frac{1}{6}\left[\left(\frac{f''(z)}{f'(z)}\right)'-\frac{1}{2}\left(\frac{f''(z)}{f'(z)}\right)^2\right];
\end{equation}
\begin{equation}\label{inv-2}
\phi_{n+1}(f; z)=\frac{1}{n+3}\left[\phi'_{n}(f; z)-\sum_{k=1}^{n-1}\phi_{k}(f; z)\phi_{n-k}(f; z)\right],\, n\geq 1.
\end{equation}
Here $S_f$ is the Schwarzian derivative of $f$, and $N_f$ is called Pre-Schwarzian derivative of $f$. It is well known that the Schwarzian derivative $S_f$ is M\"obius invariant, see \cite[Chapter II]{L}.  It follows from (\ref{inv-1}) and (\ref{inv-2}) that any $\phi_n(f; z)(n\geq 1)$ is  M\"obius invariant in the sense that 
$$\phi_n(\tau \circ f; z)= \phi_n(f; z)$$
for all M\"obius transformation $\tau$. So $\phi_n(f; z)(n\geq 1)$ are often called as Aharonov invariants.

\begin{remark}
When $f\in \mathcal{M}(\mathbb{D})$ has a first order pole $z$,  we can define  $\phi_n(f; z)$ in view of  (\ref{inv-1}) and (\ref{inv-2}) for all $n \geq 1$.
\end{remark}

From \cite{Ah}, we have 
\begin{theorem}\label{ah-1}
Let $f\in \mathcal{M}(\mathbb{D})$. Then $f$ is univalent meromorphic in $\mathbb{D}$ if and only if, for all $\zeta\in \mathbb{D}$, it holds that
\begin{equation}\label{a-1}
\sum_{n=1}^{\infty}n\left|\sum_{k=1}^n \binom{n-1}{k-1} (-\bar{\zeta})^{n-k}(1-|\zeta|^2)^{k+1}\phi_k(f; \zeta)\right|^2\leq 1.
\end{equation}
\end{theorem}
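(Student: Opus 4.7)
The plan is to encode the univalence of $f$ at an arbitrary base point $\zeta\in\mathbb{D}$ into membership in the class $\Sigma$ and then invoke Gr\"onwall's area theorem (Theorem \ref{g-1}). For each $\zeta\in\mathbb{D}$ that is not a pole of $f$, I would set
\[
F_\zeta(u):=\frac{f'(\zeta)(1-|\zeta|^2)}{f(\sigma_\zeta(1/u))-f(\zeta)}-\bar\zeta,\qquad u\in\mathbb{D}_e.
\]
The map $u\mapsto\sigma_\zeta(1/u)$ is a conformal bijection from $\mathbb{D}_e$ onto $\mathbb{D}\setminus\{\zeta\}$ sending $\infty$ to $\zeta$, so whenever $f$ is univalent on $\mathbb{D}$, $F_\zeta$ is meromorphic and univalent on $\mathbb{D}_e$ with a single simple pole at $\infty$; thus $F_\zeta\in\Sigma$.

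The key step is to identify the Laurent expansion of $F_\zeta$ at $\infty$. Starting from the elementary identity $\sigma_\zeta(1/u)-\zeta=(1-|\zeta|^2)/(u+\bar\zeta)$, I would substitute $w-\zeta=(1-|\zeta|^2)/(u+\bar\zeta)$ into the defining expansion (\ref{inv}) of the Aharonov sequence and then expand each factor $(u+\bar\zeta)^{-k}$ by the binomial series as $\sum_{m\geq k}\binom{m-1}{k-1}(-\bar\zeta)^{m-k}u^{-m}$. Interchanging the two summations gives
\[
F_\zeta(u)=u+b_0(\zeta)+\sum_{m=1}^{\infty}b_m(\zeta)u^{-m},\qquad b_m(\zeta)=\sum_{k=1}^{m}\binom{m-1}{k-1}(-\bar\zeta)^{m-k}(1-|\zeta|^2)^{k+1}\phi_k(f;\zeta),
\]
so the Laurent coefficients $b_m(\zeta)$ are precisely the bracketed quantities in (\ref{a-1}).

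The ``only if'' direction now follows immediately: Theorem \ref{g-1} applied to $F_\zeta\in\Sigma$ yields $\sum_{m\geq 1}m|b_m(\zeta)|^2\leq 1$, which is (\ref{a-1}). For the converse, the Parseval-type computation underlying (\ref{n-2}) gives
\[
\sum_{m=1}^{\infty}m|b_m(\zeta)|^2=\frac{1}{\pi}\iint_{\mathbb{D}_e}|F_\zeta'(u)-1|^2\,du\,dv,
\]
so if (\ref{a-1}) holds then the right-hand side is finite and $F_\zeta$ can have no poles in $\mathbb{D}_e\setminus\{\infty\}$. Equivalently, $f(w)\neq f(\zeta)$ for every regular $w\in\mathbb{D}\setminus\{\zeta\}$; letting $\zeta$ range over all non-poles of $f$ then gives injectivity of $f$ on its set of regular points, and a short perturbation argument near any hypothetical second pole (using that $f$ is locally surjective onto a neighborhood of $\infty$ near each pole) rules out $f$ having more than one pole, completing the proof of univalence.

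The main obstacle I anticipate is the combinatorial bookkeeping in the second paragraph, namely verifying that after the binomial expansion and the interchange of sums the Laurent coefficients of $F_\zeta$ really reorganize into the precise binomial--Aharonov sums appearing in (\ref{a-1}); once that identification is in hand, both implications are essentially direct applications of the area theorem together with Parseval's identity on $\mathbb{D}_e$. Some additional care is needed to handle the case where the base point $\zeta$ is itself a pole of $f$, which is done via the M\"obius invariance of the $\phi_k$'s together with the recursion (\ref{inv-2}).
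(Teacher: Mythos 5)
Your ``only if'' half is essentially the paper's own route (the paper quotes Theorem \ref{ah-1} from Aharonov without proof, but proves its $\lambda$-generalizations, Theorems \ref{main-1}--\ref{main-3}, by exactly this scheme): your $F_\zeta$ is, up to the harmless constant $-\bar\zeta$, the function $\mathbf{G}_f(\zeta;u)=1/\mathbf{K}_f(\zeta;u^{-1})\in\Sigma$ of Section 4, and your Laurent-coefficient identification is precisely the computation (\ref{last-1})--(\ref{last-4}), which does yield $b_m(\zeta)=\sum_{k=1}^m\binom{m-1}{k-1}(-\bar\zeta)^{m-k}(1-|\zeta|^2)^{k+1}\phi_k(f;\zeta)$; Gr\"onwall's theorem then gives (\ref{a-1}). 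That part is correct and matches the paper's machinery.

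The genuine gap is in your converse. The Parseval identity $\sum_m m|b_m(\zeta)|^2=\frac{1}{\pi}\iint_{\mathbb{D}_e}|F_\zeta'(u)-1|^2\,du\,dv$ (the $\lambda=1$ case of (\ref{n-2})) is only valid when $F_\zeta$ is analytic in all of $\mathbb{D}_e$, i.e.\ when its Laurent series at $\infty$ converges throughout $|u|>1$ --- which is exactly what you are trying to establish. If $F_\zeta$ had a pole at some $u_0$ with $|u_0|>1$, the series about $\infty$ would converge only for $|u|>|u_0|$ and the identity simply would not hold, so no contradiction follows from the finiteness of $\sum_m m|b_m(\zeta)|^2$; as written the step is circular. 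The repair is short and is in fact how the paper argues the if-parts of Theorems \ref{main-1} and \ref{main-3}: (\ref{a-1}) gives $m|b_m(\zeta)|^2\le 1$, hence $|b_m(\zeta)|\le 1$, so the series $u+b_0(\zeta)+\sum_{m\ge1}b_m(\zeta)u^{-m}$ converges and is analytic (and locally bounded) on all of $|u|>1$; since $F_\zeta$ is meromorphic in $\mathbb{D}_e$ and coincides with this sum near $\infty$, the identity theorem forbids any pole of $F_\zeta$ in $\mathbb{D}_e$, i.e.\ $f(w)\ne f(\zeta)$ for $w\ne\zeta$. With that replacement, together with your perturbation argument excluding a second pole and the M\"obius-invariance treatment of pole base points (both fine), the proof goes through; note also that the coefficient bound you actually need is boundedness, which is the content of the Aharonov/paper boundedness criterion (Theorem \ref{ah-3}, resp.\ Theorem \ref{main-3}) through which the paper deduces the if-part of the area-type criterion.
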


\begin{theorem}\label{ah-2}
Let $f\in \mathcal{M}(\mathbb{D})$. Then $f$ is univalent meromorphic and a full mapping in $\mathbb{D}$, if and only if,  for all $\zeta\in \mathbb{D}$, it holds that
\begin{equation}
\sum_{n=1}^{\infty}n\left|\sum_{k=1}^n \binom{n-1}{k-1} (-\bar{\zeta})^{n-k}(1-|\zeta|^2)^{k+1}\phi_k(f; \zeta)\right|^2 \equiv 1. \nonumber
\end{equation}
\end{theorem}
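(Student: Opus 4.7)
The plan is to mimic Aharonov's derivation of Theorem \ref{ah-1} from Grönwall's area theorem (Theorem \ref{g-1}), but to track the equality case throughout. Given $\zeta \in \mathbb{D}$ which is not a pole of $f$, the strategy is to build an auxiliary function in the Grönwall class $\Sigma$ whose Laurent coefficients at $\infty$ are precisely the expressions appearing inside the absolute value in Theorem \ref{ah-2}.

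Concretely, I would introduce
$$\psi_\zeta(w) := \frac{(1-|\zeta|^2)f'(\zeta)}{f(\sigma_\zeta(1/w)) - f(\zeta)} - \bar\zeta, \qquad w \in \mathbb{D}_e.$$
Since $\sigma_\zeta$ is a disk automorphism, $1/w \in \mathbb{D}$ for $w \in \mathbb{D}_e$, and $f$ is univalent meromorphic on $\mathbb{D}$, the map $w \mapsto f(\sigma_\zeta(1/w))$ is univalent meromorphic on $\mathbb{D}_e$ with value $f(\zeta)$ at $\infty$. Using $\sigma_\zeta(1/w) - \zeta = (1-|\zeta|^2)/(w + \bar\zeta)$, a short asymptotic calculation gives $\psi_\zeta(w) = w + O(1)$ at $\infty$, while univalence of $\psi_\zeta$ on $\mathbb{D}_e$ is inherited from that of $f$, so $\psi_\zeta \in \Sigma$.

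Substituting $w \mapsto \sigma_\zeta(1/w)$ and $z \mapsto \zeta$ in the generating relation (\ref{inv}) and multiplying by $1-|\zeta|^2$ produces
$$\psi_\zeta(w) = w + (1-|\zeta|^2)\phi_0(f;\zeta) + \sum_{k=1}^\infty (1-|\zeta|^2)^{k+1}\phi_k(f;\zeta)(w+\bar\zeta)^{-k}.$$
Expanding each $(w+\bar\zeta)^{-k}$ as a binomial series in $1/w$ (valid since $|w| > |\bar\zeta|$ on $\mathbb{D}_e$) and collecting the coefficient of $w^{-n}$ identifies the $n$-th Grönwall coefficient $b_n$ of $\psi_\zeta$ with the expression $\sum_{k=1}^n \binom{n-1}{k-1}(-\bar\zeta)^{n-k}(1-|\zeta|^2)^{k+1}\phi_k(f;\zeta)$ appearing in Theorem \ref{ah-2}. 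By the equality statement in Theorem \ref{g-1}, the left-hand side of (\ref{a-1}) therefore equals $1$ iff $\psi_\zeta$ is a full mapping. Because $\psi_\zeta(\mathbb{D}_e)$ is the image of $f(\mathbb{D})$ under a Möbius transformation, and Möbius maps preserve spherical null sets, $\psi_\zeta$ is full iff $f$ is full.

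This settles the forward direction when $\zeta$ is not a pole; the case when $\zeta$ is a pole follows either by continuity in $\zeta$ (using the recursion (\ref{inv-2})) or by first post-composing $f$ with a Möbius map that removes the pole, which leaves both sides unchanged thanks to the Möbius invariance of $\phi_k$ noted after (\ref{inv-2}). For the converse, if the equality in Theorem \ref{ah-2} holds for every $\zeta \in \mathbb{D}$, then in particular (\ref{a-1}) holds, so Theorem \ref{ah-1} guarantees $f$ is univalent meromorphic; the forward argument then makes $\psi_\zeta \in \Sigma$ available, and the equality forces $\psi_\zeta$, and hence $f$, to be full. The main obstacle is the bookkeeping in the binomial expansion that matches $b_n$ to the Aharonov expression, together with a careful justification that ``full mapping'' transports cleanly between $f$ and $\psi_\zeta$ when $f$ is genuinely meromorphic and may hit $\infty$; otherwise the argument is a faithful tracking of the equality case through Aharonov's proof.
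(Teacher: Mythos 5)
Your proof is correct and takes essentially the approach the paper itself relies on: although the paper only quotes Theorem \ref{ah-2} from Aharonov, your construction of $\psi_\zeta$ realizes the Aharonov expressions as the Laurent coefficients of the inverted Koebe transform (exactly the paper's $\Psi_n(f;\zeta)$ in (\ref{last-4})), and combining the equality case of Theorem \ref{g-1} with Theorem \ref{ah-1} for the univalence step is precisely the $\lambda=1$ specialization of the paper's proof of Theorem \ref{main-2}. The only soft spot is the ``continuity in $\zeta$'' option at a pole (lower semicontinuity of the sum yields just one inequality), but the M\"obius post-composition alternative you offer, using the M\"obius invariance of $\phi_k$ for $k\geq 1$, settles that case cleanly.
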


\begin{theorem}\label{ah-3}
Let $f\in \mathcal{M}(\mathbb{D})$. Then $f$ is univalent meromorphic in $\mathbb{D}$ if and only if, for all $\zeta\in \mathbb{D}$, there is a positive constant $C(\zeta)$ such that for all $n\geq 1$,
\begin{equation}\left|\sum_{k=1}^n \binom{n-1}{k-1} (-\bar{\zeta})^{n-k}(1-|\zeta|^2)^{k+1}\phi_k(f; \zeta)\right|\leq C(\zeta).\nonumber
\end{equation}
\end{theorem}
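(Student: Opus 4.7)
The plan is to identify the quantity
$$A_n(\zeta):=\sum_{k=1}^n \binom{n-1}{k-1}(-\bar\zeta)^{n-k}(1-|\zeta|^2)^{k+1}\phi_k(f;\zeta)$$
with the $n$-th Taylor coefficient (for $n\ge 1$) about $w=0$ of the auxiliary function
$$H_\zeta(w)\;:=\;\frac{(1-|\zeta|^2)f'(\zeta)}{f(\sigma_\zeta(w))-f(\zeta)}-\frac{1}{w}\;=\;\frac{1}{\mathbf{K}_f(\zeta;w)}-\frac{1}{w},$$
under the provisional assumption that $\zeta\in\mathbb{D}$ is not a pole of $f$. To carry this out I would substitute $z=\sigma_\zeta(w)$ into the Aharonov generating function (\ref{inv}) and use the elementary identities
$$z-\zeta=\frac{(1-|\zeta|^2)w}{1+\bar\zeta w},\qquad \frac{1-|\zeta|^2}{z-\zeta}=\frac{1}{w}+\bar\zeta,$$
together with the binomial expansion $(1+\bar\zeta w)^{-k}=\sum_{j\ge 0}\binom{k+j-1}{j}(-\bar\zeta)^j w^j$; collecting the coefficient of $w^n$ yields
$$H_\zeta(w)=A_0(\zeta)+\sum_{n=1}^{\infty}A_n(\zeta)\,w^n$$
as a power series about the origin, with $A_0(\zeta)=\bar\zeta+(1-|\zeta|^2)\phi_0(f;\zeta)$.

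For necessity, assume $f$ is univalent meromorphic and $\zeta$ is not a pole of $f$. Then $\mathbf{K}_f(\zeta;\cdot)$ belongs to the class $S$, and Theorem~\ref{g-2} gives $\sum_{n\ge 1}n|A_n(\zeta)|^2\le 1$, hence $|A_n(\zeta)|\le 1$ for every $n\ge 1$. If $\zeta$ happens to be a pole of $f$, I would exploit the M\"obius invariance of $\phi_k$ for $k\ge 1$ (and therefore of the combination $A_n(\zeta)$) to replace $f$ by $\tau\circ f$ for a M\"obius $\tau$ rendering $\tau\circ f$ analytic at $\zeta$, reducing to the previous case. This delivers necessity with $C(\zeta)=1$.

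For sufficiency, fix $\zeta\in\mathbb{D}$; by the same M\"obius invariance we may again assume $f$ is analytic at $\zeta$. The hypothesis $|A_n(\zeta)|\le C(\zeta)$ for $n\ge 1$ implies that the power series $A_0(\zeta)+\sum_{n\ge 1}A_n(\zeta)w^n$ has radius of convergence at least one, so it defines an analytic function $P_\zeta$ on the whole disk $\mathbb{D}$. On the other hand $H_\zeta$ is a priori only meromorphic on $\mathbb{D}$, its possible poles being exactly the nonzero points $w\in\mathbb{D}$ at which $\mathbf{K}_f(\zeta;w)=0$ (the prescribed simple pole at $w=0$ is cancelled by the $-1/w$ term, and poles of $f$ only produce additional zeros of $1/\mathbf{K}_f$). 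Since $P_\zeta$ and $H_\zeta$ agree in a neighborhood of the origin, the identity theorem propagates the equality throughout the connected complement of the (discrete) pole set; but $P_\zeta$ is finite on $\mathbb{D}$, ruling out the existence of any such pole. Therefore $\mathbf{K}_f(\zeta;w)\neq 0$ for all $w\in\mathbb{D}\setminus\{0\}$, i.e.\ $f(z)\neq f(\zeta)$ for every $z\in\mathbb{D}\setminus\{\zeta\}$; letting $\zeta$ range over $\mathbb{D}$ yields univalence.

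The main obstacle I anticipate is the coefficient identification in the first step: it is a somewhat delicate piece of binomial bookkeeping, though entirely mechanical. Once it is in hand, the remainder of the argument is a clean interplay of elementary convergence of a bounded-coefficient power series, the identity theorem for meromorphic functions, and the M\"obius invariance of the Aharonov sequence (for indices $k\ge 1$) used to sidestep poles of $f$.
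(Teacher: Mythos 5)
Your proposal is correct in substance, and it is essentially the route the paper's own machinery takes for its generalizations (the paper only cites Aharonov for Theorem \ref{ah-3} itself). Your coefficient identification of $A_n(\zeta)$ with the Taylor coefficients of $1/\mathbf{K}_f(\zeta;w)-1/w$ is exactly the paper's computation of $\Psi_n(f;z)$ in (\ref{last-4}) (equivalently, Lemma \ref{lemma-2} specialized to $\lambda=1$); your necessity step is the area theorem applied to the Koebe transform, as in Lemma \ref{lemma-1}; and your sufficiency step is the same ``bounded coefficients keep the generating function finite where injectivity would fail'' argument used in the paper's proof of Theorem \ref{main-3}, which you phrase via the identity theorem for the meromorphic function $1/\mathbf{K}_f(\zeta;\cdot)-1/w$ while the paper evaluates the series at the specific point $\beta$ with $\mathcal{F}(\beta)=\mathcal{F}(0)$. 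Your explicit use of the M\"obius invariance of $\phi_k$, $k\ge 1$, to handle poles is a genuine addition and is needed here, since Theorem \ref{ah-3} concerns $\mathcal{M}(\mathbb{D})$ whereas the paper's Theorems \ref{main-1}--\ref{main-3} are restricted to $\mathcal{L}(\mathbb{D})$.

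One small repair is needed in the necessity direction: you invoke Theorem \ref{g-2} after asserting $\mathbf{K}_f(\zeta;\cdot)\in S$, but if the univalent meromorphic $f$ has a pole at some $p\neq\zeta$, then $\mathbf{K}_f(\zeta;\cdot)$ has a pole at $\sigma_\zeta^{-1}(p)$ and is not in $S$. Two immediate fixes are available: either apply Theorem \ref{g-1} to $g(\eta)=1/\mathbf{K}_f(\zeta;\eta^{-1})$, which does belong to $\Sigma$ because $\mathbf{K}_f(\zeta;\cdot)$ is univalent and vanishes only at the origin (its pole merely produces a zero of $g$ in $\mathbb{D}_e$), and whose coefficients are the $\Psi_n(f;\zeta)=A_n(\zeta)$; or extend the M\"obius reduction you already use: since $f(\mathbb{D})\neq\widehat{\mathbb{C}}$, choose $w_0\notin f(\mathbb{D})$ and $\tau(w)=1/(w-w_0)$, so that $\tau\circ f$ is analytic and univalent on all of $\mathbb{D}$, and conclude via the M\"obius invariance of $\phi_k$, $k\ge 1$. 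With that adjustment the argument is complete, and the constant $C(\zeta)=1$ you obtain in the necessity direction is consistent with the bound $\sum_{n\ge1}n|A_n(\zeta)|^2\le 1$.
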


\begin{remark}In the above theorems, the combinatorial number $\binom{\alpha}{m}$ is defined as 
$$\binom{\alpha}{m}:=\frac{(\alpha)_m}{m!}=\frac{\alpha(\alpha-1)\cdots (\alpha-m+1)}{m!}.$$
Where $\alpha\in \mathbb{R}$ and $m$ is a non-negative integer number. In particular, we have
 $$\binom{\alpha}{0}=1\,\,  {\text{and}}\,\, \binom{-\alpha}{m}=(-1)^m\binom{\alpha+m-1}{m},\, {\textup {for any}}\, \alpha\in \mathbb{R}.$$
It should be pointed out that, in the recent paper \cite{Su}, Sugawa has reformulated above univalence criterion by using projective Schwarzian derivatives of higher order. 
 \end{remark}

Let $f\in \mathcal{L}(\mathbb{D})$. For $\lambda>0$,  we next consider the following generating function and its expansion,
\begin{equation}\label{new}
\left[\frac{f'(z)(w-z)}{f(w)-f(z)}\right]^{\lambda}=\sum_{n=0}^{\infty}\Phi_{\lambda, n}(f; z)(w-z)^n.
\end{equation}
 For the sake of simplicity, we will occasionally use  $\Phi_n(f; z)$ or $\Phi_n$ to denote $\Phi_{\lambda, n}(f; z)$, and use $\phi_n$ to denote $\phi_{n}(f; z)$.   
 
From (\ref{inv}), we get that 
 \begin{eqnarray}
\left[\frac{f'(z)(w-z)}{f(w)-f(z)}\right]^{\lambda}&=&\left[1+\sum_{n=0}^{\infty}\phi_n(w-z)^{n+1}\right]^{\lambda} \nonumber \\
&=&\sum_{n=0}^{\infty}\binom{\lambda}{n} \left[\sum_{k=0}^{\infty}\phi_k(w-z)^{k+1}\right]^{n}.\nonumber 
\end{eqnarray} 
Then we see that $\Phi_0\equiv 1$ and 
\begin{eqnarray}
1+\sum_{n=1}^{\infty}\Phi_{n}(w-z)^n=1+\sum_{n=1}^{\infty}\binom{\lambda}{n}(w-z)^n \left[\sum_{k=0}^{\infty}\phi_k(w-z)^{k}\right]^{n}.\nonumber \end{eqnarray}

Note that 
\begin{equation}
 \left[\sum_{k=0}^{\infty}\phi_n(w-z)^{k}\right]^{n}=\sum_{k_1, k_2, \cdots, k_n=0}^{\infty}\phi_{k_1}\phi_{k_2}\cdots\phi_{k_n}(w-z)^{k_1+k_2+\cdots+k_n},\, n\geq 1. \nonumber
\end{equation}
It follows that 
\begin{eqnarray}\label{Phi}
\Phi_n=\sum_{j=1}^{n}\binom{\lambda}{j} \sum_{(k_1, k_2, \cdots, k_j)\in I(n,j)}\phi_{k_1}\phi_{k_2}\cdots\phi_{k_j},\, n\geq 1. \end{eqnarray}
Here, $I(n,j)$ is the set of all $j$-tuples $(k_1,\cdots, k_j)$ of non-negative integers with $k_1+\cdots+k_j=n-j$. 

In particular,  we have $$\Phi_1(f; z)=\lambda \phi_0(f; z)=-\frac{\lambda}{2}N_f(z);$$
$$\Phi_2(f; z)=\lambda \phi_1(f; z)+\frac{\lambda(\lambda-1)}{2}\phi_0^2(f; z)=-\frac{\lambda}{6}S_f(z)+\frac{\lambda(\lambda-1)}{8}N_f^2(z).$$

\begin{remark}We get from (\ref{Phi}) that $\Phi_n(n\geq 0)$ is affine invariant in the sense that 
$$\Phi_n(\chi \circ f; z)= \Phi_n(f; z)$$
for all affine transformation $\chi(z)=az+b$ since $\phi_0$ is  affine invariant and $\phi_n(n\geq 1)$ is M\"obius invariant. 
From this fact, we see that we can't define $\Phi_n(f; z)$ if $f$ belongs to $\mathcal{M}(\mathbb{D})$ and $z$ is a pole of $f$. We will call $\{\Phi_n(f; z)\}$ as {\em  mixed Aharonov sequence}. (\ref{Phi}) explicitly shows how the mixed sequence is determined by the classical Aharonov invariants.
\end{remark}

We now state our first main results of this paper. We denote
$$A_n(f; \zeta):=\sum_{j=0}^{n}(-1)^{n-j}\binom{\lambda}{n-j}\sum_{k=0}^{j}\binom{j-1}{j-k}(-\bar{\zeta})^{n-k}(1-|\zeta|^2)^{k}\Phi_{k}(f; \zeta).$$

\begin{theorem}\label{main-1}
Let $\lambda>0$ and $f \in \mathcal{L}(\mathbb{D})$. Then $f$ is univalent analytic in $\mathbb{D}$ if and only if, for all $\zeta\in \mathbb{D}$, it holds that
\begin{equation}\label{m-ine-1}
\sum_{n=1}^{\infty}(n-\lambda)\left|A_n(f; \zeta)\right|^2\leq \lambda.
\end{equation}
\end{theorem}

\begin{theorem}\label{main-2}
Let $\lambda>0$ and $f \in \mathcal{L}(\mathbb{D})$. Then $f$ is univalent analytic and a full mapping in $\mathbb{D}$, if and only if,  for all $\zeta\in \mathbb{D}$, it holds that
\begin{equation}\label{m-ine-2}
\sum_{n=1}^{\infty}(n-\lambda)\left|A_n(f; \zeta)\right|^2\equiv \lambda.
\end{equation}
\end{theorem}

\begin{theorem}\label{main-3}
Let $\lambda>0$ and $f \in \mathcal{L}(\mathbb{D})$. Then $f$ is univalent analytic in $\mathbb{D}$ if and only if, for all $\zeta\in \mathbb{D}$, there is a positive constant $C(\lambda, \zeta)$ such that for all $n\geq 1$,
\begin{equation}\label{m-ine-3}\left|A_n(f; \zeta)\right|\leq C(\lambda, \zeta).
\end{equation}
\end{theorem}

\begin{remark}
Note that (\ref{m-ine-1}) can be equivalently written as
\begin{equation} 
\sum_{n>\lambda}(n-\lambda)\left|A_n(f; \zeta)\right|^2\leq \lambda+\sum_{n\leq \lambda}(\lambda-n)\left|A_n(f; \zeta)\right|^2. \nonumber
\end{equation}
This formulation has the advantage that both sides consist of non‑negative terms, making the role of low‑order coefficients explicit.
\end{remark}
\begin{remark}
Due to the arbitrariness of $\lambda>0$, our new criteria are more flexible than the classical Aharonov criterion (which corresponds to the special case $\lambda=1$. The unified formulation also reveals a previously unnoticed connection between the Prawitz area theorem and Aharonov’s univalence criterion. It is hoped that the mixed Aharonov sequence and these criteria will find applications in future research.
\end{remark}

\section{\bf Proof of Theorem \ref{main-1},  \ref{main-2} and \ref{main-3}}
We need the following lemmas in our proof of Theorem \ref{main-1},  \ref{main-2} and \ref{main-3}.
\begin{lemma}\label{lemma-1}
Let  $\lambda>0$ and $f\in \mathcal{U}(\mathbb{D})$. Then we have
 \begin{equation}\label{p-area}
 \sum_{n=1}^{\infty}(n-\lambda)|\Phi_n(f; 0)|^2\leq \lambda.
 \end{equation}
 Equality holds in (\ref{p-area}) if and only if $f$ is a full mapping.
\end{lemma}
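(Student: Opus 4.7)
The plan is to reduce the statement to Theorem \ref{pra-s} (the $S$-version of Prawitz's area theorem) by exploiting the affine invariance of the coefficients $\Phi_n(f;z)$. Since Theorem \ref{pra-s} is already stated for normalized univalent functions in $S$, the inequality \eqref{p-area} will follow once we identify the coefficients $\Phi_n(f;0)$ with the coefficients $a_n(\lambda)$ appearing in \eqref{ps-ine}.

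First I would set $z=0$ in the defining generating function \eqref{new}, obtaining
\begin{equation*}
\left[\frac{f'(0)\,w}{f(w)-f(0)}\right]^{\lambda}=\sum_{n=0}^{\infty}\Phi_n(f;0)\,w^n,
\end{equation*}
with $\Phi_0(f;0)=1$. Next I would normalize: for $f\in\mathcal{U}(\mathbb{D})$, introduce the affine conjugate $F(w):=\bigl(f(w)-f(0)\bigr)/f'(0)$, which lies in $S$. A direct computation shows
\begin{equation*}
\frac{F'(w)\,w}{F(w)-F(0)}=\frac{f'(w)\,w}{f(w)-f(0)},
\end{equation*}
and evaluating at $w\to 0$ and using the affine invariance noted in the remark following \eqref{Phi} gives $\Phi_n(F;0)=\Phi_n(f;0)$ for every $n\ge 0$. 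Hence, with no loss of generality, I may assume $f\in S$.

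With this reduction, the left-hand side of the generating function identity becomes $[w/f(w)]^{\lambda}$, which matches exactly the expansion appearing in Theorem \ref{pra-s}. Comparing coefficients therefore yields $\Phi_n(f;0)=a_n(\lambda)$ for all $n\ge 1$, and Theorem \ref{pra-s} immediately gives
\begin{equation*}
\sum_{n=1}^{\infty}(n-\lambda)|\Phi_n(f;0)|^2=\sum_{n=1}^{\infty}(n-\lambda)|a_n(\lambda)|^2\le\lambda,
\end{equation*}
with equality if and only if $f$ is a full mapping. (Note that ``full mapping'' is also an affine invariant of $f$, so the equality case transfers back to $\mathcal{U}(\mathbb{D})$ without issue.)

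There is essentially no genuine obstacle here; the main point is simply to recognize \eqref{p-area} as a rephrasing of \eqref{ps-ine} through the affine invariance of the $\Phi_n$'s. The only mild care needed is verifying that the affine normalization preserves both the coefficients $\Phi_n(f;0)$ and the full-mapping property, both of which are immediate.
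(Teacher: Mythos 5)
Your proposal is correct and follows essentially the same route as the paper: reduce to $f\in S$ by affine invariance of the $\Phi_n$, identify $\Phi_n(f;0)$ with the coefficients $a_n(\lambda)$ of $[z/f(z)]^\lambda$, and invoke Theorem \ref{pra-s}, including the full-mapping equality case. (Only a cosmetic slip: the displayed identity should involve $F'(0)\,w/(F(w)-F(0))=f'(0)\,w/(f(w)-f(0))$, i.e.\ the derivative at the base point $0$, not $F'(w)$; this does not affect the argument.)
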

\begin{proof}
We may assume that $f\in S$ since $\Phi_n(n\geq 0)$ is affine invariant.  Noticing that
\begin{equation}\label{l-1}
\left[\frac{z}{f(z)}\right]^{\lambda}=\left[\frac{f'(0)(z-0)}{f(z)-f(0)}\right]^{\lambda}=1+\sum_{n=1}^{\infty}\Phi_{n}(f; 0)z^n, \, z\in \mathbb{D}.
\end{equation}  
Then the lemma follows by using Theorem \ref{pra-s}.  The proof of Lemma \ref{lemma-1} is complete.
\end{proof}

\begin{lemma}\label{lemma-2}
Let $\lambda>0$ and $f\in \mathcal{L}(\mathbb{D})$. Let $w, \zeta\in \mathbb{D}$ be fixed and let $z=\sigma_{\zeta}(w)=\frac{w+\zeta}{1+\bar{\zeta}w}$.  We set
$$\mathbf{F}(w)=f(z).$$
Then we have 
\begin{eqnarray}\label{l-22}
\lefteqn{\Phi_{n}(\mathbf{F}; w)=} \nonumber\\
&&\sum_{j=0}^{n}(-1)^{n-j}\binom{\lambda}{n-j}\sum_{k=0}^{j}\binom{j-1}{j-k}\frac{(-\bar{\zeta})^{n-k}(1-|\zeta|^2)^{k}}{(1+\bar{\zeta}w)^{n+k}}\Phi_{k}(f; z), \, n\geq 0.
\end{eqnarray}
\end{lemma}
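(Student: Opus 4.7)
The plan is to work directly from the defining generating function of $\Phi_n$, exploiting the Möbius covariance of $\sigma_\zeta$ to reduce everything for $\mathbf{F}$ to corresponding data for $f$ times an explicit automorphic factor. Setting $z = \sigma_\zeta(w)$ and $z' = \sigma_\zeta(w')$, the chain rule gives $\mathbf{F}'(w) = f'(z)\cdot \frac{1-|\zeta|^2}{(1+\bar\zeta w)^2}$, while a direct computation of the Möbius difference yields $z' - z = \frac{(1-|\zeta|^2)(w' - w)}{(1+\bar\zeta w)(1+\bar\zeta w')}$. Dividing these two relations produces the key identity
\begin{equation*}
\frac{\mathbf{F}'(w)(w'-w)}{\mathbf{F}(w')-\mathbf{F}(w)} \;=\; \frac{f'(z)(z'-z)}{f(z')-f(z)}\cdot \frac{1+\bar\zeta w'}{1+\bar\zeta w},
\end{equation*}
after which raising to the $\lambda$-th power is the only nontrivial analytic input: the branch conventions are consistent because $\mathbf{F}'(w) \neq 0$ and the Jacobian factor $(1-|\zeta|^2)/(1+\bar\zeta w)^2$ is nonvanishing on $\mathbb{D}$.

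Next I would expand the right-hand side (raised to $\lambda$) as a formal power series in $u := w' - w$. Writing $1 + \bar\zeta w' = (1+\bar\zeta w)(1+t)$ with $t = \frac{\bar\zeta u}{1+\bar\zeta w}$, the second factor to the $\lambda$ becomes simply $(1+t)^\lambda$; on the first factor, I use the very definition \eqref{new} of $\Phi_k(f;z)$ to obtain $\sum_{k \geq 0}\Phi_k(f;z)(z'-z)^k$, and then substitute $(z'-z)^k = \frac{(1-|\zeta|^2)^k u^k}{(1+\bar\zeta w)^{2k}(1+t)^k}$. The $(1+t)^k$ factors cancel partially with $(1+t)^\lambda$, leaving $(1+t)^{\lambda-k}$, whose binomial expansion $(1+t)^{\lambda-k} = \sum_{i \geq 0}\binom{\lambda-k}{i} t^i$ is valid for small $u$. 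Collecting the coefficient of $u^n$ (with $k + i = n$) yields the compact closed form
\begin{equation*}
\Phi_n(\mathbf{F}; w) \;=\; \sum_{k=0}^{n} \binom{\lambda - k}{n-k}\,\frac{(1-|\zeta|^2)^k\,\bar\zeta^{\,n-k}}{(1+\bar\zeta w)^{n+k}}\,\Phi_k(f;z).
\end{equation*}

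The remaining (and main) obstacle is purely combinatorial: I must rewrite this single sum as the double sum displayed in \eqref{l-22}. Grouping the stated formula by $k$ and pulling out the factor $\frac{(-\bar\zeta)^{n-k}(1-|\zeta|^2)^k}{(1+\bar\zeta w)^{n+k}}\Phi_k(f;z)$ reduces the task to the identity
\begin{equation*}
\sum_{j=k}^{n} (-1)^{n-j}(-1)^{n-k}\binom{\lambda}{n-j}\binom{j-1}{j-k} \;=\; \binom{\lambda-k}{n-k}.
\end{equation*}
Substituting $i = j - k$ and using the negation rule $\binom{j-1}{j-k} = \binom{k+i-1}{i} = (-1)^i\binom{-k}{i}$, the signs reduce to $+1$ and the identity becomes the Vandermonde convolution $\sum_{i=0}^{n-k}\binom{\lambda}{n-k-i}\binom{-k}{i} = \binom{\lambda-k}{n-k}$, which is standard. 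Care must be taken at the boundary values $j = 0$ and $k = 0, n$, where $\binom{-1}{0} = 1$ and $\binom{j-1}{j} = 0$ for $j \geq 1$ via the convention stated after Theorem~\ref{ah-3}, but these edge cases are consistent. Assembling these three pieces — the covariance identity, the power-series expansion, and the Vandermonde rewriting — establishes \eqref{l-22}.
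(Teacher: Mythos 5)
Your proof is correct, and its first half is exactly the computation in the paper: the paper also derives the covariance identity $\bigl[\mathbf{F}'(w)\Delta w/(\mathbf{F}(w+\Delta w)-\mathbf{F}(w))\bigr]^{\lambda}=\bigl[f'(z)\Delta z/(f(z+\Delta z)-f(z))\bigr]^{\lambda}\bigl[1+\bar{\zeta}\Delta w/(1+\bar{\zeta}w)\bigr]^{\lambda}$ and then expands in powers of $\Delta w$. The difference is in the bookkeeping of the expansion. The paper keeps the two factors separate: it writes $(\Delta z)^j$ via $d/(1-d)$ with $d=-\bar{\zeta}\Delta w/(1+\bar{\zeta}w)$, expands $(1-d)^{-j}$ to get the inner sum with $\binom{j-1}{j-k}$, and then takes the Cauchy product with the binomial series of $\bigl[1+\bar{\zeta}\Delta w/(1+\bar{\zeta}w)\bigr]^{\lambda}$; the stated double sum drops out directly, with no combinatorial identity needed. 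You instead merge the factors, absorbing $(1+t)^{-k}$ into $(1+t)^{\lambda}$ to get $(1+t)^{\lambda-k}$, which yields the compact single-sum form
\begin{equation*}
\Phi_n(\mathbf{F};w)=\sum_{k=0}^{n}\binom{\lambda-k}{n-k}\frac{\bar{\zeta}^{\,n-k}(1-|\zeta|^2)^{k}}{(1+\bar{\zeta}w)^{n+k}}\Phi_k(f;z),
\end{equation*}
and then you must pay for this with the Chu--Vandermonde convolution $\sum_{i}\binom{\lambda}{n-k-i}\binom{-k}{i}=\binom{\lambda-k}{n-k}$ (after the negation rule) to recover the displayed double sum; I checked the signs and the edge cases $k=0$, $j=0$, and the identity does hold for non-integer $\lambda$. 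So your route costs one extra combinatorial step but buys a cleaner closed form for $\Phi_n(\mathbf{F};w)$, which in effect shows that the double sum in the lemma collapses to a single sum; the paper's route is slightly more direct for the statement as written. The only point to tighten is the branch discussion: the correct justification is that for $w'$ near $w$ both factors in your covariance identity are near $1$, so the principal $\lambda$-th powers multiply, matching the normalization $\Phi_0\equiv 1$ (the nonvanishing of $\mathbf{F}'$ alone is not the reason), and the rearrangement of the double series in powers of $u$ is justified by absolute convergence for small $|u|$; the paper is equally silent on both points, so this is not a gap.
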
 
\begin{proof}
Choose $\Delta z$ with $|\Delta z|$ small enough, we define $\Delta w$ by 
\begin{equation}
z+\Delta z=\frac{(w+\Delta w)+\zeta}{1+\bar{\zeta}(w+\Delta w)}.\nonumber 
\end{equation}
Then we get that 
\begin{equation}\label{d-con}
\Delta z=\frac{(1-|\zeta|^2)\Delta w}{[1+\bar{\zeta}(w+\Delta w)](1+\bar{\zeta} w)}=\frac{1-|\zeta|^2}{-\bar{\zeta}(1+\bar{\zeta}w)} \frac{d}{1-d}, \, d=\frac{-\bar{\zeta}}{1+\bar{\zeta} w} \Delta w,
\end{equation}
and
\begin{equation}
\frac{\Delta w}{\Delta z}=\left(1+\frac{\bar{\zeta} \Delta w }{1+\bar{\zeta}w}\right) \frac{(1+\bar{\zeta}w)^2}{1-|\zeta|^2}.\nonumber 
\end{equation}
It follows that 
\begin{eqnarray}
\frac{\mathbf{F}'(w)}{\mathbf{F}(w+\Delta w)-\mathbf{F}(w)}=\frac {f'(z)}{f(z+\Delta z)-f(z)} \frac{1-|\zeta|^2}{(1+\bar{\zeta}w)^2}, \nonumber 
\end{eqnarray}
and 
\begin{eqnarray}
\left[\frac{\mathbf{F}'(w)\Delta w}{\mathbf{F}(w+\Delta w)-\mathbf{F}(w)}\right]^{\lambda}&=&\left[\frac{f'(z)\Delta z}{f(z+\Delta z)-f(z)}\frac{\Delta w}{\Delta z} \frac{1-|\zeta|^2}{(1+\bar{\zeta}w)^2}\right]^{\lambda} \nonumber \\
&=& \left[\frac{f'(z)\Delta z}{f(z+\Delta z)-f(z)}\right]^{\lambda}\left[1+\frac{\bar{\zeta} \Delta w }{1+\bar{\zeta}w}\right]^{\lambda}.\nonumber 
\end{eqnarray}
Thus we obtain that 
\begin{eqnarray}\label{eee}
\sum_{n=0}^{\infty}\Phi_{n}(\mathbf{F}; w)(\Delta w)^n=\left[\sum_{n=0}^{\infty}\Phi_{n}(f; z)(\Delta z)^n\right]\left[1+\frac{\bar{\zeta} \Delta w }{1+\bar{\zeta}w}\right]^{\lambda}.
\end{eqnarray}

On the other hand, from (\ref{d-con}), we have 
\begin{eqnarray}\label{con-1}
\lefteqn{\sum_{n=0}^{\infty}\Phi_{n}(f; z)(\Delta z)^n=\sum_{n=0}^{\infty}\Phi_{n}(f; z)\frac{(1-|\zeta|^2)^n}{(-\bar{\zeta})^n(1+\bar{\zeta}w)^n}\cdot d^n {(1-d)^{-n}}}\nonumber \\
&&= \sum_{n=0}^{\infty}\Phi_{n}(f; z) \frac{(1-|\zeta|^2)^n}{(-\bar{\zeta})^n(1+\bar{\zeta}w)^n}\cdot d^n \sum_{m=0}^{\infty}\binom{-n}{m}{(-d)}^{m} \nonumber  \\
&&= \sum_{n=0}^{\infty}\Phi_{n}(f; z) \frac{(1-|\zeta|^2)^n}{(-\bar{\zeta})^n(1+\bar{\zeta}w)^n}\cdot d^n \sum_{m=0}^{\infty}\binom{n+m-1}{m}{d}^{m}
\end{eqnarray}
Then we get  that the coefficient of $(\Delta w)^j$ in (\ref{con-1}) is 
\begin{equation}
\sum_{k=0}^{j}\binom{j-1}{j-k}\frac{(-\bar{\zeta})^{j-k}(1-|\zeta|^2)^{k}}{(1+\bar{\zeta}w)^{j+k}}\Phi_{k}(f; z), \, j\geq 0. \nonumber
\end{equation}

Note that 
$$\left[1+\frac{\bar{\zeta} \Delta w }{1+\bar{\zeta}w}\right]^{\lambda}=\sum_{n=0}^{\infty}\binom{\lambda}{n} \frac{{\bar{\zeta}}^n }{(1+\bar{\zeta}w)^n}(\Delta w)^n.$$
It follows that  the coefficient of $(\Delta w)^n$ in the right side of (\ref{eee}) is 
\begin{equation}
\sum_{j=0}^{n}(-1)^{n-j}\binom{\lambda}{n-j}\sum_{k=0}^{j}\binom{j-1}{j-k}\frac{(-\bar{\zeta})^{n-k}(1-|\zeta|^2)^{k}}{(1+\bar{\zeta}w)^{n+k}}\Phi_{k}(f; z), \, n\geq 0.
\end{equation}

Thus, from (\ref{eee}),  we find that (\ref{l-22}) holds. The lemma is proved. 
\end{proof}

In the rest of this section, we present the proof of Theorem \ref{main-1},  \ref{main-2} and \ref{main-3}.  We will use the same notation as in Lemma \ref{lemma-2}.
\begin{proof}[Proof of Theorem \ref{main-1} and \ref{main-3}]
We first prove the only if part. We assume that $f\in \mathcal{U}(\mathbb{D})$.  Let $w, \zeta\in \mathbb{D}$ be fixed and let $z=\frac{w+\zeta}{1+\bar{\zeta}w}$.  We set $\mathbf{F}(w)=f(z)$.  Then we have 
\begin{eqnarray}\label{pro-1}
\lefteqn{\Phi_{n}(\mathbf{F}; 0)=} \\
&&\sum_{j=0}^{n}(-1)^{n-j}\binom{\lambda}{n-j}\sum_{k=0}^{j}\binom{j-1}{j-k}(-\bar{\zeta})^{n-k}(1-|\zeta|^2)^{k}\Phi_{k}(f; \zeta),\, n \geq 0, \nonumber
\end{eqnarray}
by Lemma \ref{lemma-2}, and \begin{equation}\label{pro-2}
 \sum_{n=1}^{\infty}(n-\lambda)|\Phi_n(\mathbf{F}; 0)|^2\leq \lambda,
 \end{equation}
 by  Lemma \ref{lemma-1} since $\mathbf{F}$ is also univalent analytic in $\mathbb{D}$. Thus (\ref{m-ine-1}) follows by (\ref{pro-1}) and (\ref{pro-2}) and the only if part of Theorem \ref{main-1} is proved.
 
 When $\lambda\in (0,1)$, (\ref{pro-2}) implies that  
\begin{equation}|\Phi_n(\mathbf{F}; 0)|\leq  \sqrt{\frac{\lambda}{1-\lambda}},  \nonumber
\end{equation}for all $\zeta\in \mathbb{D}, n\geq 1.$

When $\lambda\geq1$,  there is a positive integer $\mathbf{N}$ such that $\mathbf{N}\leq \lambda < \mathbf{N}+1$. Then (\ref{pro-2}) yields  that  
\begin{eqnarray}
|\Phi_n(\mathbf{F}; 0)| \leq\sqrt{\frac{\lambda+ \mathbf{E}(\lambda, \zeta)}{\mathbf{N}+1-\lambda}}:=\mathbf{E}_1(\lambda, \zeta), \nonumber
\end{eqnarray}
for all $\zeta\in \mathbb{D}, n\geq \mathbf{N}+1$.  Here $$ \mathbf{E}(\lambda, \zeta)=\sum_{k=1}^{N}(\lambda-k)|\Phi_k(\mathbf{F}; 0)|^2.$$
Meanwhile, it's obvious that 
$$|\Phi_n(\mathbf{F}; 0)|  \leq \sqrt{\sum_{k=1}^{\mathbf{N}}|\Phi_k(\mathbf{F}; 0)|^2}:=\mathbf{E}_2(\lambda, \zeta),$$ for all $n \in [1, \mathbf{N}].$ 

Then, from (\ref{pro-1}), we find that (\ref{m-ine-3}) is true for all $\zeta\in \mathbb{D}, n\geq 1$,  when we take 
$$C(\lambda, \zeta)=\sqrt{\frac{\lambda}{1-\lambda}},$$ for $\lambda\in (0,1)$, 
and  
$$C(\lambda, \zeta)=\mathbf{E}_1(\lambda, \zeta)+\mathbf{E}_2(\lambda, \zeta),$$
for $\lambda\geq 1$.  The only if part of Theorem \ref{main-3} is proved.

We proceed to prove the  if part.  We assume that (\ref{m-ine-3}) holds. If $f$ is not univalent analytic in $\mathbb{D}$, then there are two different points $\zeta, \alpha$ in $\mathbb{D}$ with $f(\zeta)=f(\alpha)$. We let
$$\mathcal{F}(z)=f(\sigma_{\zeta}(z))\,\, {\text{and}}\,\ \alpha=\sigma_{\zeta}(\beta).$$
Here $\sigma_{\zeta}$ is defined as in (\ref{sigma}). Then we get that 
\begin{equation}\label{proof}\mathcal{F}(0)=f(\zeta)=f(\alpha)=\mathcal{F}(\beta). \end{equation}
It is easy to see that $\beta \neq 0$ since $\zeta \neq \alpha$.

On the other hand, from (\ref{new}), we have 
\begin{equation}\label{proof-1}
\left[\frac{{\mathcal{F}}'(0)\beta}{\mathcal{F}(\beta)-\mathcal{F}(0)}\right]^{\lambda}=1+\sum_{n=1}^{\infty}\Phi_n(\mathcal{F}; 0){\beta}^n.
\end{equation}
Also, we see from the assumption and (\ref{pro-1}) that
\begin{equation}\label{proof-2}
|\Phi_{n}(\mathcal{F}; 0)|\leq C(\lambda, \zeta),
\end{equation} 
for all $n\geq 1.$
It follows from (\ref{proof-1}) and (\ref{proof-2}) that  
\begin{equation}
\frac{|{\mathcal{F}}'(0)\beta|^{\lambda}}{|\mathcal{F}(\beta)-\mathcal{F}(0)|^{\lambda}}\leq 1+\sum_{n=1}^{\infty}|\Phi_n(\mathcal{F}; 0)|{|\beta|}^n\leq 1+C(\lambda, \zeta)\frac{|\beta|}{1-|\beta|} <+\infty. \nonumber
\end{equation}
This contradicts (\ref{proof}) so that $f$ is univalent analytic in $\mathbb{D}$.  The if part of Theorem \ref{main-3} is proved and the proof of Theorem \ref{main-3} is finished.

Next, we assume that (\ref{m-ine-1}) holds, then we know from the above arguments that  (\ref{m-ine-3}) holds with $C(\lambda, \zeta)=\sqrt{\lambda/(1-\lambda)}$ when $\lambda\in (0,1)$, 
and  $C(\lambda, \zeta)=\mathbf{E}_1(\lambda, \zeta)+\mathbf{E}_2(\lambda, \zeta)$ when $\lambda\geq 1.$  By using the if part of Theorem \ref{main-3},  we see that $f$ is univalent analytic in $\mathbb{D}$. The proof of Theorem \ref{main-1} is complete. 
\end{proof}

\begin{proof}[Proof of Theorem \ref{main-2}]
First, noting that $\mathbf{F}$ is a full mapping if $f$ is a full mapping, we see from the Prawitz area theorem that (\ref{m-ine-2}) holds if $f$ is univalent analytic and a full mapping.
The only if part of Theorem \ref{main-2} is proved. 

We then assume (\ref{m-ine-2}) holds. By Theorem \ref{main-1} or \ref{main-3}, we get that $f$ is univalent analytic in $\mathbb{D}$. Taking $\zeta=0$ in (\ref{m-ine-2}), we see that 
\begin{equation}
\sum_{n=1}^{\infty}(n-\lambda)\left|\Phi_{n}(f; 0)\right|^2\equiv \lambda. \nonumber
\end{equation}
It follows from Lemma \ref{lemma-1} that $f$ is a full mapping.  The if part of Theorem \ref{main-2} is proved. 
\end{proof}
Now,  the proof of Theorem \ref{main-1}, \ref{main-2} and \ref{main-3} is done. 

\begin{remark}[on Theorem \ref{main-2}]
For the well-known Koebe function 
\[\kappa(z)=\frac{z}{(1-z)^2}, z\in \mathbb{D},\]
a simple computation yields that 
\[
\left[\frac{z}{\kappa(z)}\right]^\lambda = (1-z)^{2\lambda},\]
which implies that 
\[
\Phi_n(\kappa;0)=\binom{2\lambda}{n}(-1)^n.
\]
It follows from (\ref{pro-1}) and (\ref{m-ine-2}) with $\zeta=0$ that 
\[
\sum_{n=1}^{\infty}(n-\lambda)\left[\binom{2\lambda}{n}\right]^2=\lambda, \]
for all \(\lambda>0\). Here, the equality follows from the fact that $\kappa$ is a full mapping.
This means that our criterion reduces to an identity for all $\lambda>0$, while Aharonov's original criterion only covers the single case $\lambda=1$. This also reveals the underlying role of the Prawitz area theorem.
\end{remark}

\section{\bf{Remarks and the properties of (mixed) Aharonov sequence}}
 \begin{remark}
 We first remark that our results generalize some related ones of Aharonov in \cite{Ah}.  Let $f\in \mathcal{L}(\mathbb{D})$.  When $\lambda=1$, from (\ref{inv}) and (\ref{new}),  we have
 \begin{equation}
\frac{f'(z)(w-z)}{f(w)-f(z)}=1+\sum_{n=1}^{\infty}\Phi_n(f; z)(w-z)^{n}=1+\sum_{n=0}^{\infty}\phi_n(f; z)(w-z)^{n+1}. \nonumber
\end{equation}
It follows that $\Phi_n(f; z)=\phi_{n-1}(f; z)$ for $n\geq 1$.  Then we see from  \cite[Theorem 2]{Ah} that the inequality (\ref{a-1}) is equivalent to 
\begin{equation}
 \sum_{n=1}^{\infty} n|\phi_n(\mathbf{F}; 0)|^2=\sum_{n=2}^{\infty}(n-1)|\Phi_n(\mathbf{F}; 0)|^2  \leq 1. \nonumber
 \end{equation}
Where $\mathbf{F}$ is defined as in Lemma \ref{lemma-2}.

On the other hand,  when $\lambda=1$, we see from Lemma \ref{lemma-2} that the inequality (\ref{m-ine-1}) is equivalent to 
\begin{equation}
 \sum_{n=2}^{\infty}(n-1)|\Phi_n(\mathbf{F}; 0)|^2\leq 1. \nonumber
 \end{equation}
From these, we can see Theorem \ref{main-1} as a generalization of Theorem \ref{ah-1} if we assume that $f$ belongs to $ \mathcal{L}(\mathbb{D})$. By the same reason,  Theorem \ref{main-2} and \ref{main-3} also can be seen as some kind of extensions of Theorem \ref{ah-2} and \ref{ah-3}, respectively. 
\end{remark}

\begin{remark}
In 1982, Harmelin proved the following property for the Aharonov sequence, see  \cite[Theorem 1]{Ha-1}. 
\begin{proposition}
Let $f\in \mathcal{U}(\mathbb{D})$. Then we have 
\begin{equation}\label{harmelin-est}
(1-|z|^2)^{n+1}|\phi_n(f; z)|\leq \sum_{k=1}^{n}\binom{n-1}{k-1}\frac{|z|^{n-k}}{\sqrt{k}}, z\in \mathbb{D},\, n\geq 1. \nonumber
\end{equation}
\end{proposition}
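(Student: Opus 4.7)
The plan is to deduce the proposition from Aharonov's Theorem \ref{ah-1} by inverting the triangular linear relation between $\{\phi_k(f;z)\}$ and the quantities that appear inside Aharonov's inequality. For a fixed $z\in\mathbb{D}$, I introduce the shorthands
$$B_k(z):=(1-|z|^2)^{k+1}\phi_k(f;z),\qquad A_n(z):=\sum_{k=1}^n \binom{n-1}{k-1}(-\bar z)^{n-k}B_k(z),$$
so that Theorem \ref{ah-1} reads $\sum_{n\geq 1} n|A_n(z)|^2 \leq 1$. In particular, this immediately gives the scalar bound $|A_n(z)|\leq 1/\sqrt{n}$ for every $n\geq 1$, which is the only consequence of Aharonov's area estimate that I will use.

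The key step is to invert the triangular linear map $(B_k)_{k=1}^n \mapsto (A_k)_{k=1}^n$. Writing $M_{nk}=\binom{n-1}{k-1}(-\bar z)^{n-k}$, I would propose (and verify) the candidate inverse $N_{nk}=\binom{n-1}{k-1}\bar z^{n-k}$. Using the Vandermonde-type identity $\binom{n-1}{j-1}\binom{j-1}{k-1}=\binom{n-1}{k-1}\binom{n-k}{j-k}$, a short calculation gives
$$(MN)_{nk}=\bar z^{n-k}\binom{n-1}{k-1}\sum_{m=0}^{n-k}\binom{n-k}{m}(-1)^{n-k-m}=\delta_{nk},$$
since the inner sum equals $(1-1)^{n-k}$, which vanishes unless $n=k$. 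Hence
$$B_n(z)=\sum_{k=1}^n \binom{n-1}{k-1}\bar z^{n-k}A_k(z).$$

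Finally, the triangle inequality together with the bound $|A_k(z)|\leq 1/\sqrt{k}$ yields
$$(1-|z|^2)^{n+1}|\phi_n(f;z)|=|B_n(z)|\leq \sum_{k=1}^n \binom{n-1}{k-1}\frac{|z|^{n-k}}{\sqrt{k}},$$
which is exactly Harmelin's inequality. The only non-routine ingredient is the triangular inversion, and this is essentially bookkeeping built on a standard binomial identity, so I do not anticipate any serious obstacle; the proof is a clean application of Aharonov's theorem combined with the observation that the two Aharonov-type linear combinations are inverse to one another under conjugation of the shift parameter $-\bar z\mapsto \bar z$.
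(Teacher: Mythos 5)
Your proof is correct. The triangular inversion is the only substantive step, and it checks out: with $M_{nk}=\binom{n-1}{k-1}(-\bar z)^{n-k}$ and $N_{nk}=\binom{n-1}{k-1}\bar z^{n-k}$, the identity $\binom{n-1}{j-1}\binom{j-1}{k-1}=\binom{n-1}{k-1}\binom{n-k}{j-k}$ and the alternating binomial sum do give $MN=NM=I$ (one can confirm the low-order cases $B_2=A_2+\bar z A_1$, $B_3=A_3+2\bar z A_2+\bar z^2 A_1$ directly), and the coefficient bound $|A_k(z)|\le 1/\sqrt{k}$ is a legitimate consequence of the only-if direction of Theorem \ref{ah-1}, since a univalent analytic $f$ is in particular univalent meromorphic. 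Your route differs from the paper's in how the inverse relation is produced: the paper does not reprove Harmelin's statement but proves the analogous bound for $\Phi_n$ by writing $f=\mathbf{F}_1\circ\sigma_\zeta$ with $\mathbf{F}_1=f\circ\sigma_\zeta^{-1}$, applying the transformation formula of Lemma \ref{lemma-2} and specializing $\zeta=-z$, so that $\Phi_n(f;z)$ appears directly as a triangular binomial combination of the coefficients $\Phi_k(\mathbf{F}_1;0)$, which are then bounded by the area theorem; in the $\lambda=1$ setting that function-theoretic step (the group property of disk automorphisms) yields precisely the inverse matrix you computed algebraically. What your argument buys is self-containedness relative to the quoted results: it needs only Aharonov's criterion plus a combinatorial identity, with no composition lemma. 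What the paper's mechanism buys is uniformity: the same one-line specialization $\zeta=-z$ of Lemma \ref{lemma-2} handles all $\lambda>0$ at once (giving the companion proposition for $\Phi_{\lambda,n}$), whereas your matrix inversion would have to be redone with the extra $\binom{\lambda}{n-j}$ convolution if one wanted the general-$\lambda$ statement.
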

We shall prove a similar result for the mixed Aharonov sequence $\{\Phi_n\}$.
\begin{proposition}\label{mixed-est}
Let $\lambda\in (0,1)$ and $f\in \mathcal{U}(\mathbb{D})$.  Then we have 
\begin{equation}\label{ggood}
(1-|z|^2)^n|\Phi_{n}(f; z)|\leq \sum_{j=0}^{n}\binom{\lambda}{n-j}\sum_{k=0}^{j}\binom{j-1}{j-k}\frac{\sqrt{\lambda}}{\sqrt{|k-\lambda|}}|{z}|^{n-k}, z\in \mathbb{D},\, n\geq 1. 
\end{equation}
\end{proposition}
\begin{proof}
For fixed $\zeta \in \mathbb{D}$. We set
$$\mathbf{F}_1(w)=f\left(\frac{w-\zeta}{1-\bar{\zeta}w}\right),$$
and $$w=\sigma_\zeta(z)=\frac{z+\zeta}{1+\bar{\zeta}z}. $$
Then we see that $f(z)={\bf F}_1 \circ \sigma_\zeta(z).$ Consequently, by Lemma \ref{lemma-2}, we get that, for $n\geq 0$, 
\begin{eqnarray}\label{good}
\lefteqn{\Phi_{n}(f; z)=\Phi_{n}(\mathbf{F}_{1}\circ \sigma_\zeta; z) } \\
&&=\sum_{j=0}^{n}(-1)^{n-j}\binom{\lambda}{n-j}\sum_{k=0}^{j}\binom{j-1}{j-k}\frac{(-\bar{\zeta})^{n-k}(1-|\zeta|^2)^{k}}{(1+\bar{\zeta}z)^{n+k}}\Phi_{k}(\mathbf{F}_{1}; w). \nonumber
\end{eqnarray}
Take $\zeta=-z$ in (\ref{good}), we obtain that, for $n\geq 0$, 
\begin{equation}\label{good-1}
\Phi_{n}(f; z)=\sum_{j=0}^{n}(-1)^{n-j}\binom{\lambda}{n-j}\sum_{k=0}^{j}\binom{j-1}{j-k}(\bar{z})^{n-k}(1-|z|^2)^{-n}\Phi_{k}(\mathbf{F}_{1}; 0). 
\end{equation}

On the other hand, from (\ref{pro-2}), we know that, for any $n\geq 1$,
$\Phi_{n}(\mathbf{F}_{1}; 0)\leq \frac{\sqrt{\lambda}}{\sqrt{n-\lambda}}.$
Also, noting that $\Phi_{0}(\mathbf{F}_{1}; 0)=1$, so we can write, for any $n\geq 0,$ 
 \begin{equation}\label{good-2}\Phi_{n}(\mathbf{F}_{1}; 0)\leq \frac{\sqrt{\lambda}}{\sqrt{|n-\lambda|}}.\end{equation}
Then (\ref{ggood}) follows by (\ref{good-2}) and (\ref{good-1}). The proposition is proved.
\end{proof}
\end{remark}
\begin{remark}
For $\lambda\in(0,1)$, the estimate in Proposition \ref{mixed-est} reveals that the growth rate of $|\Phi_n(f;z)|$ as $|z|\to 1^-$ is at most $(1-|z|^2)^{-n}$, and the explicit bound is universal, depending only on $n$, $|z|$, and $\lambda$, not on the specific univalent function $f$.
\end{remark}
 Finally, we consider the univalent analytic functions with a quasiconformal extension to the whole complex plane.  We refer the reader to \cite{LV} and \cite{L} for the introduction to the theory of quasiconformal mappings.  
 
Let $f$ be a univalent analytic function in $\mathbb{D}$, which can be  extended to a quasiconformal mapping(still denoted by $f$) in $\widehat{\mathbb{C}}$. We denote by $\mu_f(z)$ the complex dilatation of $f$.  Let $\mathbf{r}$ be a Riemann mapping from $\mathbb{D}_e$ to $\widehat{\mathbb{C}}-\overline{ f(\mathbb{D})}$. We see that $\mathbf{r}$ can be extended to a homeomorphism(still denoted by $\mathbf{r}$) from $\widehat{\mathbb{C}}-\mathbb{D}$ to $\widehat{\mathbb{C}}-f(\mathbb{D})$. Then $h=f^{-1}\circ \mathbf{r}|_{\mathbb{T}}$ is a quasisymmetric homeomorphism from $\mathbb{T}$ to itself.  It follows that $\bar{f}=\mathbf{r}^{-1}\circ f|_{\mathbb{D}_e}$ is a quasiconformal extension of $h^{-1}$
to $\mathbb{D}_e$ and $\bar{f}$ has the same complex dilatation as $f$ in $\mathbb{D}_e$.  We let $\tilde{f}=\rho \circ \bar{f} \circ \rho|_{\mathbb{D}}$, here $\rho(z)={\bar{z}}^{-1}$.
Then we see that $\tilde{f}$ is a quasiconformal extension of $h^{-1}$ to $\mathbb{D}$. Denote by $\mu_{\tilde{f}}(z)$ the complex dilatation of $\tilde{f}$, we get that 
$|\mu_{\tilde{f}}(z)|=|\mu_f({\bar{z}}^{-1})|, \, z\in \mathbb{D}$. Therefore, by Corollary 2.2 in \cite{SW}, we obtain that
\begin{equation}\label{ine-l}
U_f^2(z)\leq \frac{1}{\pi}\iint_{\mathbb{D}} \frac{|\mu_{f}({\bar{w}}^{-1})|^2}{1-|\mu_f({\bar{w}}^{-1})|^2}\frac{dudv}{|1-\bar{z}w|^4}.
\end{equation}

On the other hand, for fixed $z\in \mathbb{D}$, we know that the Koebe transformation $\mathbf{K}_{f}(z; \zeta)$ of ${f} $,  defined as in (\ref{Koe}), belongs to $S$. Then we see that $\mathbf{G}_{f}(z; \zeta):=\frac{1}{\mathbf{K}_{f}(z; \zeta^{-1})} \in \Sigma$ and set 
\begin{equation}\label{last-1}
\mathbf{G}_{f}(z; \zeta)=\frac{(1-|z|^2){f}'(z)}{f\left(\sigma_{z}(\zeta^{-1})\right)-{f}(z)} =\zeta+\sum_{n=0}^{\infty}\Psi_n({f}; z)\zeta^{-n}, \, \zeta \in \mathbb{D}_e-\{\infty\}.
\end{equation}

Then, by checking the arguments of the derivation of Prawitz's inequality (\ref{main-ine}) presented in the Introduction, we get that 
\begin{equation}\label{eqn}
\sum_{n=1}^{\infty}n|\Psi_n(f; z)|^2=(1-|z|^2)^{2}U_{f}^2(z).
\end{equation} 
Combining (\ref{ine-l}), (\ref{eqn}), we obtain that
\begin{eqnarray}\label{eqn-1}
\sum_{n=1}^{\infty}n|\Psi_n(f; z)|^2&=&(1-|z|^2)^{2}U_f^2(z) \nonumber \\&\leq& \frac{(1-|z|^2)^{2}}{\pi}\iint_{\mathbb{D}} \frac{|\mu_{f}({\bar{w}}^{-1})|^2}{1-|\mu_f({\bar{w}}^{-1})|^2}\frac{dudv}{|1-\bar{z}w|^4}.
\end{eqnarray}
 
Also, we see from (\ref{inv}) that
\begin{eqnarray}\label{last-2}
\frac{f'(z)}{f\left(\sigma_z({\zeta}^{-1})\right)-f(z)}&=& \frac{1}{\frac{1+z\zeta}{\zeta+\bar{z}}-z}+\sum_{n=0}^{\infty} \phi_{n}(f ; z)(\frac{1+z\zeta}{\zeta+\bar{z}}{-z)}^n
\\
&=&   \frac{\zeta+\bar{z}}{1-|z|^2}+\sum_{n=0}^{\infty} \phi_{n}(f ; z)(1-|z|^2)^n \zeta ^{-n}(1+\frac{\bar{z}}{\zeta})^{-n}.  \nonumber
\end{eqnarray}

Note that 
\begin{equation}\label{last-3}
 (1+\frac{\bar{z}}{\zeta})^{-n}=\sum_{k=0}^{\infty}\binom{-n}{k} {\bar{z}}^{k}{\zeta}^{-k}=\sum_{k=0}^{\infty}\binom{n+k-1}{k} {(-\bar{z})}^{k}{\zeta}^{-k}.\end{equation}
It follows from (\ref{last-2}) and (\ref{last-3})  that
\begin{eqnarray}
\mathbf{G}_{f}(z; \zeta)&=&\zeta+\bar{z}+\sum_{n=0}^{\infty}\left[\phi_{n}(f; z)(1-|z|^2)^{n+1} \zeta ^{-n}\right]\left[ \sum_{k=0}^{\infty}\binom{n+k-1}{k}{(-\bar{z})}^{k}{\zeta}^{-k} \right] \nonumber \\
&=&\zeta+\bar{z}+(1-|z|^2)\phi_0(f ; z)\nonumber \\
& & \quad+\sum_{n=1}^{\infty}\left[\phi_{n}(f; z)(1-|z|^2)^{n+1} \zeta ^{-n}\right]\left[\sum_{k=0}^{\infty}\binom{n+k-1}{k}{(-\bar{z})}^{k}{\zeta}^{-k} \right]. \nonumber
\end{eqnarray}

Then we get from (\ref{last-1}) that
\begin{equation}
\Psi_0(f; z)=\bar{z}+(1-|z|^2)\phi_0(f; z)=\bar{z}-\frac{1}{2}(1-|z|^2)N_{f}(z); \nonumber
\end{equation}
\begin{equation}\label{last-4}
\Psi_n(f; z)=\sum_{k=1}^{n}\binom{n-1}{n-k}(-\bar{z})^{n-k}(1-|z|^2)^{k+1} \phi_k(f; z),\, n\geq 1. 
\end{equation}
In particular, $$\Psi_1(f; z)=(1-|z|^2)^{2}\phi_1(f; z)=-\frac{1}{6}(1-|z|^2)^{2}S_{f}(z).$$
Therefore we know that any $\Psi_n(n\geq 1)$ is Möbius invariant like $\phi_n(n\geq 1)$.

\begin{remark}
The following result was obtained in \cite[Lemma 5]{Ha-1} by Harmelin.
\begin{proposition}
Let $f$ be a univalent function in $\mathbb{D}$, which can be extended to a quasiconformal mapping(still denoted by $f$) in $\widehat{\mathbb{C}}$. Let $\|\mu_f\|_{\infty}$ be the norm of the complex dilatation $\mu_f(z)$ of $f$. Then, for all $\zeta\in \mathbb{D}$, it holds that 
\begin{equation}\label{harmelin}
\sum_{n=1}^{\infty}n\left|\sum_{k=1}^n \binom{n-1}{k-1} (-\bar{\zeta})^{n-k}(1-|\zeta|^2)^{k}\phi_k(f; \zeta)\right|^2\leq \|\mu_f\|_{\infty}^2.
\end{equation}
\end{proposition}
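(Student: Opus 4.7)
The strategy is to identify the left-hand side of (\ref{harmelin}) as a squared Grunsky norm of $f$ at $\zeta$ (up to a power of $1-|\zeta|^2$) and then to invoke the classical Grunsky--K\"uhnau inequality for univalent functions with a quasiconformal extension. First, using the symmetry $\binom{n-1}{k-1}=\binom{n-1}{n-k}$ and comparing the inner sum in (\ref{harmelin}) with the formula (\ref{last-4}) for $\Psi_n(f;\zeta)$, the inner sum coincides with $\Psi_n(f;\zeta)$ up to a factor of $1-|\zeta|^2$. Combined with the identity (\ref{eqn}), this shows that the stated bound is equivalent to the pointwise estimate $(1-|\zeta|^2)U_f(\zeta)\leq \|\mu_f\|_\infty$, where $U_f$ is the Grunsky norm introduced in the second Remark of the Introduction.

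To prove this reduced estimate, I would first pass from $\zeta$ to the origin via the Koebe transform. Set $\tilde f(w)=\mathbf{K}_f(\zeta;w)\in S$. A direct computation using the M\"obius invariance of the Grunsky kernel $U(f;z,w)\,dz\,dw$ yields $U_{\tilde f}(0)=(1-|\zeta|^2)\,U_f(\zeta)$. Moreover, since $\tilde f$ differs from $f$ by pre-composition with the disc automorphism $\sigma_\zeta$ and post-composition with an affine map, the chain rule for Beltrami coefficients gives $|\mu_{\tilde f}(w)|=|\mu_f(\sigma_\zeta(w))|$ almost everywhere, so $\|\mu_{\tilde f}\|_\infty=\|\mu_f\|_\infty$. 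The task thus reduces to: for every $\tilde f\in S$ admitting a quasiconformal extension to $\widehat{\mathbb{C}}$, prove $U_{\tilde f}(0)\leq \|\mu_{\tilde f}\|_\infty$.

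Passing to $g(z)=1/\tilde f(1/z)\in\Sigma$, which inherits a quasiconformal extension with the same essential dilatation norm, this pointwise estimate at $0$ is equivalent to the strengthened area inequality
$$
\sum_{n=1}^{\infty} n|b_n|^2\leq \|\mu_g\|_\infty^2,
$$
where $b_n$ are the Laurent coefficients of $g$ at infinity. This Grunsky--K\"uhnau refinement of Gr\"onwall's Theorem \ref{g-1} is the key nontrivial input and constitutes the main obstacle of the proof. The standard way to establish it is to apply Stokes' theorem to $\iint_{\mathbb{D}_e}|g'-1|^2\,dA$ on the annulus $\{1<|z|<R\}$ and let $R\to\infty$, to use the quasiconformal Jacobian identity $J_g=|\partial g|^2(1-|\mu_g|^2)$ on $\mathbb{D}$, and then to exploit the $L^2$-isometry property of the Beurling transform applied to the Beltrami equation $\bar\partial g=\mu_g\,\partial g$ in order to bring out the factor $\|\mu_g\|_\infty^2$; the careful tracking of boundary terms in Stokes and of the Hilbert-space structure in which the Beurling operator acts is the technical crux of the argument. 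Once this refined area theorem is in hand, the identification in the first paragraph together with the Koebe-transform reduction completes the proof of (\ref{harmelin}).
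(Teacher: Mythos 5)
The paper itself offers no proof of this Proposition: it is quoted from Harmelin \cite[Lemma 5]{Ha-1}, and what the paper actually derives in that remark, from (\ref{eqn-1}) and (\ref{last-4}), is a different inequality whose right-hand side is an integral of $|\mu_f|^2/(1-|\mu_f|^2)$. So you are reconstructing Harmelin's lemma, and the problem is that your reconstruction does not prove the statement as printed, because of the bookkeeping of the powers of $1-|\zeta|^2$ in your first paragraph. By (\ref{last-4}) (note $\binom{n-1}{n-k}=\binom{n-1}{k-1}$), the inner sum in (\ref{harmelin}), which carries $(1-|\zeta|^2)^{k}$, equals $\Psi_n(f;\zeta)/(1-|\zeta|^2)$; hence by (\ref{eqn}) the left-hand side of (\ref{harmelin}) equals $U_f^2(\zeta)$, and the printed statement is equivalent to $U_f(\zeta)\le\|\mu_f\|_\infty$, not to $(1-|\zeta|^2)U_f(\zeta)\le\|\mu_f\|_\infty$ as you assert. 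Your argument (Koebe transform plus the strengthened area theorem) proves exactly the weaker bound $(1-|\zeta|^2)U_f(\zeta)\le\|\mu_f\|_\infty$, i.e. $\sum_n n|\Psi_n(f;\zeta)|^2\le\|\mu_f\|_\infty^2$, which is the inequality with $(1-|\zeta|^2)^{k+1}$ inside --- the exact analogue of (\ref{a-1}), and what Harmelin's Lemma 5 in fact asserts. The version with exponent $k$ is false as stated: its $n=1$ term alone is $\frac{1}{36}(1-|\zeta|^2)^2|S_f(\zeta)|^2$, so it would force $(1-|\zeta|^2)|S_f(\zeta)|\le 6\|\mu_f\|_\infty$, which fails for the sector map $f(z)=\left(\frac{1+z}{1-z}\right)^{\alpha}$ with $\alpha\in(0,2)$, $\alpha\neq 1$: this $f$ has a quasiconformal extension, yet $S_f(z)=2(1-\alpha^2)/(1-z^2)^2$ makes $(1-|z|^2)|S_f(z)|$ unbounded. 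So the printed exponent is evidently a misprint for $k+1$; you should either prove the corrected statement (which is what your plan accomplishes) or flag the misprint, but the claimed equivalence in your first paragraph, taken literally, is a genuine gap.

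As for the rest of the plan: the Koebe-transform identities $U_{\tilde f}(0)=(1-|\zeta|^2)U_f(\zeta)$ and $\|\mu_{\tilde f}\|_\infty=\|\mu_f\|_\infty$ are correct, and $U_{\tilde f}(0)^2=\sum_n n|b_n|^2$ for $g(z)=1/\tilde f(1/z)$ is precisely the content of (\ref{eqn}). The whole weight, however, rests on K\"uhnau's refined area theorem $\sum_n n|b_n|^2\le\|\mu_g\|_\infty^2$, which you only sketch. Your Beurling-transform outline can be completed, but a shorter complete argument is available: the identity behind the equality statement in Theorem \ref{g-1} gives $\pi\sum_n n|b_n|^2=\pi-\operatorname{area}\bigl(g(\overline{\mathbb{D}})\bigr)$; the quasiconformal change-of-variables formula gives $\operatorname{area}\bigl(g(\overline{\mathbb{D}})\bigr)=\iint_{\mathbb{D}}\bigl(|g_z|^2-|g_{\bar z}|^2\bigr)\,dxdy\ge\bigl(1-\|\mu_g\|_\infty^2\bigr)\iint_{\mathbb{D}}|g_z|^2\,dxdy$; and Stokes' theorem together with the expansion of $g$ on $\mathbb{T}$ yields $\bigl|\iint_{\mathbb{D}}g_z\,dxdy\bigr|=\pi$, so $\iint_{\mathbb{D}}|g_z|^2\,dxdy\ge\pi$ by Cauchy--Schwarz; combining these gives the refined area theorem. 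With that input and the corrected exponent $k+1$, your reduction does yield Harmelin's inequality; as submitted, it establishes a statement strictly weaker than the one printed (which, read literally, is not true).
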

By using (\ref{eqn-1}) and (\ref{last-4}), we can get the following inequality similar to (\ref{harmelin}).
\begin{eqnarray}\label{jia}
\lefteqn{\sum_{n=1}^{\infty}n\left|\sum_{k=1}^n \binom{n-1}{k-1} (-\bar{\zeta})^{n-k}(1-|\zeta|^2)^{k}\phi_k(f; \zeta)\right|^2}  \\
 &&\quad\quad\quad\quad\quad \quad\quad \quad\quad  \leq
\frac{1}{\pi}\iint_{\mathbb{D}} \frac{|\mu_{f}({\bar{w}}^{-1})|^2}{1-|\mu_f({\bar{w}}^{-1})|^2}\frac{dudv}{|1-\bar{\zeta}w|^4},\, \zeta\in \mathbb{D}.\nonumber
\end{eqnarray}
Let $k=\|\mu_f\|_{\infty}$. Then it follows from (\ref{jia}) that 
\begin{eqnarray} 
\lefteqn{\sum_{n=1}^{\infty}n\left|\sum_{k=1}^n \binom{n-1}{k-1} (-\bar{\zeta})^{n-k}(1-|\zeta|^2)^{k}\phi_k(f; \zeta)\right|^2} \nonumber \\
 &&\quad\quad\quad \leq \frac{k^2}{1-k^2}\frac{1}{\pi}\iint_{\mathbb{D}}\frac{dudv}{|1-\bar{\zeta}w|^4}=\frac{k^2}{1-k^2}(1-|\zeta|^2)^{-2},\, \zeta\in \mathbb{D}.\nonumber
\end{eqnarray}
Here the last equality follows from the well-known integral
\[
\frac{1}{\pi}\iint_{\mathbb{D}}\frac{dudv}{|1-\bar{\zeta}w|^4} = \frac{1}{(1-|\zeta|^2)^2},\,\, |\zeta|<1.
\]
\end{remark}

\begin{remark}
From (\ref{eqn-1}),  we see that, for any $n \geq 1$, it holds that
\begin{equation}\label{ds}
\frac{n|\Psi_n(f; z)|^2}{(1-|z|^2)^2}\leq U_f^2(z) \leq \frac{1}{\pi}\iint_{\mathbb{D}} \frac{|\mu_{f}({\bar{w}}^{-1})|^2}{1-|\mu_f({\bar{w}}^{-1})|^2}\frac{dudv}{|1-\bar{z}w|^4}.
\end{equation}
It is known that we can use the Schwarzian derivative $S_f(z)=-\frac{6\Psi_1(f; z)}{(1-|z|^2)^2}$ and the quantity $U_f(z)$ to describe some subclasses of the universal Teichm\"uller space. For example, the little Teichm\"uller space,  see \cite{GS}, \cite{Sh-1}, \cite{Sh-2};  Weil-Petersson Teichm\"uller space, see \cite{B-1}, \cite{Sh-2}, \cite{Sh-3}, \cite{TT}; BMO(VMO)-Teichm\"uller space, see \cite{AZ}, \cite{SW}. It is natural to study 
\begin{question}\label{prob-1}
Characterize the subclasses of the universal Teichm\"uller space in terms of $\Psi_n\,(n\geq 2)$.
\end{question}
\end{remark}
In view of the inequality (\ref{ds}), it seems that one direction of the Problem \ref{prob-1} is easy, but another one is not.  Also, it is interesting to consider the following
\begin{question}
 Find Prawitz's inequality, analogue to the inequality (\ref{ine-l}), for the univalent analytic functions in $\mathbb{D}$ with a quasiconformal extension to the whole complex plane.
\end{question}
\begin{remark}
If we can obtain an analogue inequality to (\ref{ine-l}), then we may use it to establish some new results about the theory of integral means spectrum. 
\end{remark}
\section{{\bf Acknowledgements}}
The author was supported by National Natural Science Foundation of China (Grant No. 11501157). 

\begin{spacing}{1.2}

\end{spacing}

\end{document}